\newcommand{\mm}{\mathfrak m}
\newcommand{\N}{\mathbb{N}}
\newcommand{\PP}{\mathbb{P}}
\newcommand{\Z}{\mathbb{Z}}
\newcommand{\bo}{{\bf 0}}
\DeclareMathOperator{\chara}{char}
\DeclareMathOperator{\depth}{depth}
\DeclareMathOperator{\Ext}{Ext}
\DeclareMathOperator{\Hom}{Hom}
\DeclareMathOperator{\reg}{reg}
\DeclareMathOperator{\signum}{sign}
\DeclareMathOperator{\Sym}{Sym} 
\DeclareMathOperator{\Tor}{Tor} \DeclareMathOperator{\slope }{slope}
\DeclareMathOperator{\Br}{Rate} \DeclareMathOperator{\ind}{index}
\DeclareMathOperator{\dirsum}{\oplus}
\DeclareMathOperator{\tensor}{\otimes}
\DeclareMathOperator{\pnt}{\raise 0.5mm \hbox{\large\bf.}}
\newtheorem{thm}{\bf Theorem}[section]
\newtheorem{lem}[thm]{\bf Lemma}
\newtheorem{cor}[thm]{\bf Corollary}
\newtheorem{prop}[thm]{\bf Proposition}
\theoremstyle{definition}
\newtheorem{rem}[thm]{\bf Remark}
\newtheorem{ex}[thm]{\bf Example}
\let\signum=\relax
\title{Koszul homology and syzygies of Veronese subalgebras}
\author{Winfried Bruns}
\address{Universit\"at Osnabr\"uck, Institut f\"ur Mathematik, 49069 Osnabr\"uck, Germany}
\email{wbruns@uos.de}
\author{Aldo Conca}
\address{Dipartimento di Matematica, Universit\'{a} di Genova, Via Dodecaneso 35, 16146 Genova,
Italy} \email{conca@dima.unige.it}
\author{Tim R\"omer}
\address{Universit\"at Osnabr\"uck, Institut f\"ur Mathematik, 49069 Osnabr\"uck, Germany}
\email{troemer@uos.de}
\dedicatory{To J\"urgen Herzog, friend and teacher}
\begin{document}

\begin{abstract}
A graded $K$-algebra $R$ has property $N_p$ if it is generated in
degree $1$, has relations in degree $2$ and the syzygies of order
$\leq p$ on the relations are linear. The Green-Lazarsfeld index of
$R$ is the largest $p$ such that it satisfies the property $N_p$.
Our main results assert that (under a mild assumption on the base field)
the $c$-th Veronese subring of a polynomial ring has
Green-Lazarsfeld index $\geq c+1$. The same conclusion also holds
for an arbitrary standard graded algebra, provided $c\gg 0$.
\end{abstract}

%\subjclass{}
%\keywords{}

\maketitle

%------------------------------------------------------------------------
%
%
%
%------------------------------------------------------------------------
\section{Introduction}
\label{sec:intro}

A classical problem in algebraic geometry and commutative algebra is
the study of the equations defining projective varieties and of
their syzygies. Green and Lazarsfeld \cite{GRLA, GRLA88} introduced
the property $N_p$ for a graded ring as an indication of the
presence of simple syzygies. Let us recall the definition. A
finitely generated $\N$-graded $K$-algebra $R=\dirsum_{i} R_i$ over
a field $K$ satisfies property $N_0$ if $R$ is generated (as a
$K$-algebra) in degree $1$. Then $R$ can be presented as a quotient
of a standard graded polynomial ring $S$ and one says that $R$
satisfies \emph{property $N_p$} for some $p>0$ if
$\beta_{i,j}^S(R)=0$ for $j>i+1$ and $1\leq i \leq p$. Here
$\beta_{i,j}^S(R)$ denote the graded Betti numbers of $R$ over $S$.
For example, property $N_1$ means that $R$ is defined by quadrics,
$N_3$ means that $R$ is defined by quadrics and that the first and
second syzygies of the quadrics are linear. We define the
\emph{Green-Lazarsfeld index} of $R$, denoted by $\ind(R)$, to be
the largest $p$ such that $R$ has $N_p$, with $\ind(R)=\infty$ if
$R$ satisfies $N_p$ for every $p$. It is, in general, very difficult
to determine the precise value of the Green-Lazarsfeld index.
Important conjectures, such as Green's conjecture on the syzygies of
canonical curves \cite[Chap.9]{E}, predict the value of the
Green-Lazarsfeld index for certain families of varieties.

The goal of this paper is to study the Green-Lazarsfeld index of the
Veronese embeddings $v_c:\PP^{n-1}\to \PP^N$ of degree $c$ of
projective spaces and, more generally, of the Veronese embeddings of
arbitrary varieties. Let $S$ denote the polynomial ring in $n$
variables over the field $K$. The coordinate ring of the image of
$v_c$ is the Veronese subring $S^{(c)}=\bigoplus_{i\in \N} S_{ic}$
of $S$. If $n\leq 2$ or $c\leq 2$ then $S^{(c)}$ is a determinantal
ring whose resolution is well understood and the Green-Lazarsfeld
index can be easily deduced. If $n=2$ then $S^{(c)}$ is resolved by
the Eagon-Northcott complex and so $\ind(S^{(c)})=\infty$. The
resolution of $S^{(2)}$ in characteristic $0$ is described by
Jozefiak, Pragacz and Weyman in \cite{JPW}; it follows that
$\ind(S^{(2)})=5$ if $n>3$ and $\ind(S^{(2)})=\infty$ if $n\leq 3$.
For $n\leq 6$ Andersen \cite{And} proved that  $\ind(S^{(2)})$ is
independent on $\chara K$, but for $n>6$ and $\chara K=5$ she proved
that $\ind(S^{(2)})=4$.

For $n>2$ and $c>2$ it is known that
\begin{equation}
\label{OPbounds} c\leq \ind(S^{(c)})\leq 3c-3.
\end{equation}
The lower bound is due to Green \cite{GR2} (and holds for any $c$
and $n$). Ottaviani and Paoletti \cite{OTPA} established the upper
bound in characteristic $0$. They also showed that
$\ind(S^{(c)})=3c-3$ for $n=3$ and conjectured that $
\ind(S^{(c)})=3c-3 \text{ holds true for arbitrary } n\geq 3$; see also
Weyman \cite[Proposition 7.2.8]{WEY}. For
$n=4$ and $c=3$ it is indeed the case \cite[Lemma 3.3]{OTPA}. In their
interesting paper \cite{EGHP} Eisenbud, Green, Hulek and Popescu
reproved some of these statements using different methods. Rubei
\cite{RU04} proved that $\ind(S^{(3)})\geq 4$ if $\chara K=0$. Our
main results are the following:

 \begin{itemize}
 \item[(i)] $c+1\leq \ind(S^{(c)})$ if $\chara K=0$ or $>c+1$; see
 Corollary \ref{cor:ncpolynomial}.

\item[(ii)] If $R$ is a quotient of $S$ then $\ind(R^{(c)})\geq \ind(S^{(c)})$
for every $c\geq \slope _S(R)$; see Theorem \ref{thm:rateresult}. In
particular, if $R$ is Koszul then $\ind(R^{(c)})\geq \ind(S^{(c)})$
for every $c\geq 2$,
\end{itemize}

Furthermore we give characteristic free proofs of the bounds
(\ref{OPbounds}) and of the equality for $n=3$; see Theorem
\ref{thm:ottpa}. Our approach is based on the study of the Koszul
complex associated to the $c$-th power of the maximal ideal. Let $R$
be a standard graded $K$-algebra with maximal homogeneous ideal
$\mm$. Let $K(\mm^c,R)$ denote the Koszul complex associated to
$\mm^c$, $Z_t(\mm^c,R)$ the module of cycles of homological degree
$t$ and $H_t(\mm^c,R)$ the corresponding homology module. In Section
\ref{sec:generalbounds} we study the homological invariants of
$Z_t(\mm^c,R)$. Among other facts, we prove, in a surprisingly
simple way, a generalization of Green's theorem \cite[Thm. 2.2]{GR2}
to arbitrary standard graded algebras; see Corollary
\ref{greengoes}. If $R$ is a polynomial ring (or just a Koszul
ring), then it follows that the regularity of $Z_t(\mm^c,R)$ is
$\leq t(c+1)$; see Proposition \ref{boundZ}.

In Section \ref{sec:cycles} we investigate more closely the modules
$Z_t(\mm^c,S)$ in the case of a polynomial ring $S$. Lemma
\ref{lem:newcycles} describes certain cycles which then are
used to prove a vanishing statement in Theorem
\ref{thm:morevanishing}. In Section \ref{sec:polynomial} we improve
the lower bound (\ref{OPbounds}) by one, see Corollary
\ref{cor:ncpolynomial}. Proposition \ref{prop:duality} states
a duality of Avramov-Golod type, which is the algebraic counterpart
of Serre duality. The duality is then used to establish Ottaviani
and Paoletti's upper bound $\ind(S^{(c)})\leq 3c-3$ in arbitrary
characteristic (Theorem \ref{thm:ottpa}). We also show that for
$n=3$ one gets $\ind(S^{(c)})= 3c-3$ independently of the
characteristic; see Theorem \ref{thm:ottpa}.

In Section \ref{sec:standard} we take $R$ to be a quotient of a
Koszul algebra $D$ and prove that for every $c\geq \slope _D(R)$ we
have $\ind(R^{(c)})\geq \ind(D^{(c)})$; see Theorem
\ref{thm:rateresult}. Here $\slope _D(R)=\sup\{ t_i^D(R)/i : i\geq 1
\}$ where $t_i^D(R)$ is the largest degree of an $i$-th syzygy of
$R$ over $D$. In particular, $\slope _D(R)=2$ if $R$ is Koszul
(Avramov, Conca and Iyengar \cite{ACI}) and, when $D=S$ is a
polynomial ring, $\slope _S(R)\leq a$ if the defining ideal of $R$
has a Gr\"obner basis of elements of degree $\leq a$.  Similar
results have been obtained by Park \cite{PP} under some restrictive
assumptions.   In the last section we discuss multigraded variants
of the results presented.

\section{General bounds }
\label{sec:generalbounds}

In this section we consider a standard graded $K$-algebra $R$ with
maximal homogeneous ideal $\mm$, which is a quotient of a polynomial
ring $S$, say $R=S/I$ where $I$ is homogeneous (and may contain
elements of degree $1$). For a finitely generated graded $R$-module
$M$ let $\beta_{i,j}^R(M)=\dim_K \Tor_i^R(M,K)_j$ be the
\emph{graded Betti numbers} of $M$ over $R$. We define
the number
$$
t_i^R(M)=\max\{j \in \Z : \beta^R_{i,j}(M)\neq 0\},
$$
if $\Tor_i^R(M,K)\neq 0$ and $t_i^R(M)=-\infty$ otherwise. The
\emph{(relative) regularity of $M$ over $R$} is given by
$$
\reg_R(M)=\sup \{ t^R_i(M)-i : i\in \N \}
$$
and the \emph{Castelnuovo-Mumford regularity} of $M$ is
$$
\reg(M)=\reg_S(M)=\sup \{ t^S_i(M)-i : i\in \N \};
$$
it has also the cohomological interpretation
$$
\reg(M)=\max\{ j : H_\mm^i(M)_{j-i}\neq 0 \mbox{ for some } i\in \N\}
$$
where $H_\mm^i(M)$ denotes the $i$-th local cohomology module of
$M$. One defines the \emph{ slope } of $M$ over $R$ by
$$
\slope _R(M)=\sup \{ \frac{ t^R_i(M)-t^R_0(M)}{i} : i\in \N, i>0\},
$$
and the \emph{Backelin rate} of $R$ by
$$
\Br(R)=\slope _R(\mm)=\sup \{ \frac{t^R_{i}(K)-1}{i-1} : i\in \N,
i>1\}.
$$
The Backelin rate measures the deviation from being Koszul: in
general, $\Br(R)\geq 1$, and $R$ is Koszul if and only if $\Br(R)=1$.
Finally, the \emph{Green-Lazarsfeld index} of $R$ is given by
$$
\ind(R)=\sup\{ p\in \N : t_i^S(R)\leq i+1 \mbox{ for every } i\leq p\}.
$$
It is the largest non-negative integer $p$ such that $R$ satisfies
the property $N_p$. Note that we have $\ind(R)=\infty$ if and only
if $\reg(R)\leq 1$, that is, the defining ideal of $R$ has a
$2$-linear resolution. On the other hand, $\ind(R)\geq 1$ if and
only if $R$ is defined by quadrics. In general, $\reg(M)$ and
$\slope _R(M)$ are finite (see \cite{ACI}) while $\reg_R(M)$ can be
infinite. However, $\reg_R(M)$ is finite if $R$ is Koszul, see
Avramov and Eisenbud \cite{AE}.

\begin{rem}
The invariants $\reg(M)$ and $\ind(R)$ are defined in terms of a
presentation of $R$ as a quotient of a polynomial ring but do not
depend on it. The assertion is a consequence of the following
formula which is obtained, for example, from the graded analogue of
\cite[Theorem 2.2.3]{AV96}: if $x\in R_1$ and $xM=0$, then
$$
\beta_{i,j}^R(M) = \beta_{i,j}^{R/(x)}(M) +
\beta_{i-1,j-1}^{R/(x)}(M) .
$$
\end{rem}

We record basic properties of these invariants. The modules are
graded and finitely generated and the homomorphisms are of degree
$0$.

\begin{lem}
\label{lem:tiineq} Let $R$ be a standard graded $K$-algebra, $N$ and
$M_j$, $j\in \N$, be $R$-modules and $i\in \N$.
\begin{enumerate}
\item
Let
$$
0\to M_1\to M_2\to M_3\to 0
$$
be an exact sequence. Then
\begin{align*}
t^R_i(M_1)&\leq \max\{ t^R_i(M_2) , t^R_{i+1}(M_3)\},\\
t^R_i(M_2)&\leq \max\{ t^R_i(M_1) , t^R_i(M_3)\},\\
t^R_i(M_3)&\leq \max\{ t^R_i(M_2) , t^R_{i-1}(M_1)\}.
\end{align*}
\item
Let
$$
\cdots \to M_{k+1}\to M_{k}\to M_{k-1}\to \cdots \to M_1 \to M_0\to
N\to 0
$$
be an exact complex. Then
\begin{align*}
t^R_i(N)&\leq \max\{ t^R_{i-j}(M_j) : j=0,\dots ,i\}
 \intertext{and}
\reg_R(N)&\leq \sup\{ \reg_R(M_j)-j : j\geq 0\}.
\end{align*}

\item If $N$ vanishes in degree $>a$ then $t^R_i(N)\leq t^R_i(K)+a$.

\item Let $J$ be a homogeneous ideal of $R$. If
$\reg_R(R/J)=0$, then
$$
\ind(R/J)\geq \ind(R).
$$
\end{enumerate}
\end{lem}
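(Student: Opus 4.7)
Parts (a) and (b) are routine homological algebra. For (a) the plan is to apply $\Tor^R_*(-,K)$ to the short exact sequence and read the three inequalities off each internal-degree strand of the resulting long exact sequence: e.g., any class in $\Tor_i^R(M_2,K)_j$ either survives in $\Tor_i^R(M_3,K)_j$ or lifts from $\Tor_i^R(M_1,K)_j$, so $j$ is bounded by $t_i^R(M_3)$ or $t_i^R(M_1)$. For (b), I would split the exact complex into short exact sequences $0\to Z_k\to M_k\to Z_{k-1}\to 0$ with $Z_0=N$ and iterate (a) inductively on $k$; the regularity assertion then follows by subtracting $i$ and taking the supremum.

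For (c), the key observation is that a finitely generated graded module $N$ with $N_j=0$ for $j>a$ is automatically $\mathfrak{m}$-torsion: each generator $n_k$ of degree $d_k\le a$ is killed by $R_{>a-d_k}$, hence by some power of $\mathfrak{m}$. A finitely generated $\mathfrak{m}$-torsion graded module over a Noetherian standard graded algebra has finite length, so $N$ admits a composition series $0=N_0\subset N_1\subset\cdots\subset N_r=N$ with $N_k/N_{k-1}\cong K(-j_k)$ and $j_k\le a$. Iterating the second inequality of (a) together with $t_i^R(K(-j_k))=t_i^R(K)+j_k$ then yields the bound by induction on $k$.

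For (d), the plan is to combine (a)--(c) with the standard change-of-rings inequality
$$ t_i^S(R/J)\;\le\;\max_{p+q=i}\bigl\{t_p^R(R/J)+t_q^S(R)\bigr\}, $$
which I would establish as follows: take a minimal $R$-free resolution $G_\bullet$ of $R/J$, replace each summand $R(-b)$ of $G_i$ by the appropriate shift of a minimal $S$-free resolution $F_\bullet$ of $R$, and totalize the resulting double complex to obtain an $S$-free (non-minimal) resolution of $R/J$ whose term in total degree $n$ has generators in degrees $\le\max_{p+q=n}\{t_p^R(R/J)+t_q^S(R)\}$. Equivalently, one can invoke the Cartan--Eilenberg spectral sequence $\Tor_p^R(R/J,\Tor_q^S(R,K))\Rightarrow \Tor_{p+q}^S(R/J,K)$ and bound its $E^2$-page via (c) applied to the $K$-module $\Tor_q^S(R,K)$. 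Once the inequality is available, $\reg_R(R/J)=0$ forces $t_p^R(R/J)\le p$ for every $p$, while $q\le \ind(R)$ gives $t_q^S(R)\le q+1$ (with $t_0^S(R)=0$); hence for $i\le \ind(R)$ the maximum equals $i+1$, attained at $(p,q)=(0,i)$, so $t_i^S(R/J)\le i+1$ and $\ind(R/J)\ge \ind(R)$. The only mildly non-formal step is verifying the change-of-rings inequality in (d); the remaining work is bookkeeping with long exact sequences.
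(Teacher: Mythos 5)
Your proofs of (a)--(c) are essentially the paper's: (a) is read off the long exact sequence for $\Tor^R(\cdot,K)$, (b) is the standard splitting of the complex into short exact sequences (the paper phrases it as induction on $i$ using (a)), and your composition-series argument for (c) is the same filtration idea as the paper's induction on $a-\min\{j: N_j\neq 0\}$: in both cases one filters $N$ by submodules whose successive quotients are shifted copies of $K$ in degrees $\le a$ and iterates (a). The genuine divergence is in (d). You deduce it from the general change-of-rings bound $t_i^S(R/J)\le\max_{p+q=i}\{t_p^R(R/J)+t_q^S(R)\}$, obtained either by totalizing a Cartan--Eilenberg-type resolution or from the spectral sequence $\Tor_p^R(R/J,\Tor_q^S(R,K))\Rightarrow\Tor_{p+q}^S(R/J,K)$; this works, but note two points. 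First, your appeal to (c) for the $E^2$-bound is misplaced: (c) controls $\Tor_p^R(\Tor_q^S(R,K),K)$, not $\Tor_p^R(R/J,\Tor_q^S(R,K))$; the correct (and easy) argument is that $\Tor_q^S(R,K)$ is killed by $\mm$, hence is a finite direct sum of copies of $K(-j)$ with $j\le t_q^S(R)$, so the $E^2$-term vanishes in degrees above $t_p^R(R/J)+t_q^S(R)$. Second, the paper gets (d) with none of this machinery: since $\reg_R(R/J)=0$, the minimal $R$-free resolution of $R/J$ is linear, with $j$-th term $R(-j)^{\beta_j}$, and applying the inequality of (b) over $S$ to this exact complex of $S$-modules gives directly $t_i^S(R/J)\le\max\{t_{i-j}^S(R)+j: j=0,\dots,i\}\le i+1$ for $i\le\ind(R)$ (the paper's text says ``(c)'' at this point, but the displayed inequality is that of (b)). So your route is heavier but yields a more flexible mixed bound, of the kind the paper itself uses later in the remark following Theorem \ref{thm:rateresult}, whereas the paper's argument stays entirely within the elementary parts (a)--(b) of the lemma by exploiting the linearity of the $R$-resolution.
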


\begin{proof}
To prove (a) one just considers the long exact homology sequence for
$\Tor^R(\cdot,K)$. For (b) one uses induction on $i$ and
applies (a). Part (c) is proved by induction on $a-\min\{ j :
N_j\neq 0\}$. For (d) one applies (c) to the minimal free resolution
of $R/J$ as an $R$-module. For every $i$ one gets $t_i^S(R/J)\leq
\max\{ t^S_{i-j}(R(-j)) : j=0,\dots i\}$. But we have
$t^S_{i-j}(R(-j))=t^S_{i-j}(R)+j$. If $i\leq \ind(R)$ then
$t^S_{i-j}(R)\leq i-j+1$. It follows that $t_i^S(R/J)\leq i+1$ for
every $i\leq \ind(R)$. Hence $\ind(R/J)\geq \ind(R)$.
\end{proof}

Let $M$ be an $R$-module and let $K(\mm^c,M)$ be the graded Koszul
complex associated to the $c$-th power of the maximal ideal of $R$.
Let $Z_i(\mm^c,M),$ $B_i(\mm^c,M),$ $H_i(\mm^c,M)$ denote the
$i$-th cycles, boundaries and homology of $K(\mm^c,M)$,
respectively. We have:

\begin{lem}\label{veryeasy}
Set $Z_i=Z_i(\mm^c,M)$. For every $a\geq 0$ and $i\geq 0$ we have:
\begin{align*}
t^R_a ( Z_{i+1})\leq\max\bigl\{&t^R_a(M)+(i+1)c, \\
       &t^R_{a+1}(Z_i), \\
       &t^R_{0}(Z_i)+c+(a+1)\Br(R)\bigr\}.
\end{align*}
\end{lem}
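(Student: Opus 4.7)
The plan is to chase the two tautological short exact sequences associated to the Koszul complex and combine two applications of Lemma \ref{lem:tiineq}(a) with a bookkeeping bound on the range of degrees where the Koszul homology is nonzero. Throughout I abbreviate $K_j=K_j(\mm^c,M)$, $B_j=B_j(\mm^c,M)$ and $H_j=H_j(\mm^c,M)$, and work with
\[
0\to Z_{i+1}\to K_{i+1}\to B_i\to 0,\qquad 0\to B_i\to Z_i\to H_i\to 0.
\]

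First I would apply Lemma \ref{lem:tiineq}(a) to the left sequence, obtaining $t^R_a(Z_{i+1})\leq\max\{t^R_a(K_{i+1}),\,t^R_{a+1}(B_i)\}$. Since $K_{i+1}$ is a direct sum of copies of $M(-(i+1)c)$, one has $t^R_a(K_{i+1})=t^R_a(M)+(i+1)c$, which is the first entry of the claimed maximum. Applying the same lemma to the right sequence gives $t^R_{a+1}(B_i)\leq\max\{t^R_{a+1}(Z_i),\,t^R_{a+2}(H_i)\}$, and the first argument here supplies the second entry.

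The decisive remaining step is to bound $t^R_{a+2}(H_i)$ by $t^R_0(Z_i)+c+(a+1)\Br(R)$. Here I would invoke the classical fact that Koszul homology is annihilated by the Koszul elements, so $\mm^c\cdot H_i=0$. Since $R$ is standard graded, $\mm^c=R_{\geq c}$, and hence $R/\mm^c$ is concentrated in degrees $\leq c-1$. As $H_i$ is a quotient of $Z_i$, it is generated in degrees $\leq t^R_0(Z_i)$, so $H_i$ vanishes in degrees strictly greater than $t^R_0(Z_i)+c-1$. Now Lemma \ref{lem:tiineq}(c) gives
\[
t^R_{a+2}(H_i)\leq t^R_{a+2}(K)+t^R_0(Z_i)+c-1,
\]
and the definition of the Backelin rate yields $t^R_{a+2}(K)\leq 1+(a+1)\Br(R)$ upon substituting $i=a+2$ into the defining supremum. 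Combining these two inequalities produces exactly the third term of the asserted maximum.

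If there is any real obstacle, it is the degree-range estimate for $H_i$; but once the annihilation $\mm^c\cdot H_i=0$ is taken as known (the standard Koszul trick via the DG-algebra structure on $K(\mm^c,R)$, which makes multiplication by each Koszul element null-homotopic on $K$), all that remains is formal bookkeeping through the two short exact sequences above.
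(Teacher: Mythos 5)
Your proof is correct and follows essentially the same route as the paper: the same two short exact sequences, the same application of Lemma \ref{lem:tiineq}(a) and (c), the annihilation $\mm^c H_i=0$ giving the degree bound $t^R_0(Z_i)+c-1$ on $H_i$, and the estimate $t^R_{a+2}(K)\leq 1+(a+1)\Br(R)$ from the definition of the Backelin rate. No gaps to report.
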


\begin{proof} Set $B_i=B_i(\mm^c, M)$ and $H_i=H_i(\mm^c,M)$.
Recall that $\mm^cH_i=0$ and hence $H_i$ vanishes in degrees $>
t_0(Z_i)+c-1$. It follows from Lemma \ref{lem:tiineq}(c) that
$$t_a^R(H_i)\leq t^R_0(Z_i)+c-1+t_a^R(K).$$
The short exact sequences
$$0
\to B_i \to Z_i \to H_i \to 0
$$
and
$$
0\to Z_{i+1} \to K_{i+1} \to B_i \to 0
$$
now imply that
\begin{align*}
t^R_a(Z_{i+1})&\leq
\max\{ t^R_a(M)+(i+1)c, \ t^R_{a+1}(B_i)\}\\
&\leq \max\{ t^R_a(M)+(i+1)c, \ t^R_{a+1}(Z_i), \ t^R_{a+2}(H_i) \} \\
&\leq \max\{ t^R_a(M)+(i+1)c, \ t^R_{a+1}(Z_i), \
t^R_0(Z_i)+c-1+t^R_{a+2}(K) \}.
\end{align*}
Since, by the very definition, $t^R_{a+2}(K)\leq 1+(a+1)\Br(R)$ the
desired result follows.
\end{proof}

Lemma \ref{veryeasy} allows us to bound $t^R_a(Z_i)$
inductively in terms of the various $t^R_{j}(M)$ and of
$\Br(R)$:

\begin{prop}\label{boundZ}
Set $Z_i=Z_i(\mm^c,M)$.
\begin{enumerate}
\item
Assume $c\geq \slope _R(M)$. Then for all $a,i\in \N$ we have
$$
t^R_a(Z_i)\leq t^R_0(M)+ic+\max\{a \slope _R(M), (a+i)\Br(R)\}.
$$
In particular, $\slope _R (Z_i)\leq\max\{\slope _R M , (1+i) \Br(R) \}$.

\item Assume $R$ is Koszul, i.e., $\Br(R)=1$. Then for all $a,i\in \N$ we
have
$$
t^R_a(Z_i)\leq a+i(c+1)+\reg_R(M).
$$
In particular, $\reg_R (Z_i)\leq i(c+1)+\reg_R(M)$.
\end{enumerate}
\end{prop}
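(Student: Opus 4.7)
The plan is to prove both (a) and (b) by induction on $i$, using Lemma \ref{veryeasy} as the recursive step. In each case, the base case $i=0$ is immediate from the definitions, since $Z_0 = M$: for (a), $t_a^R(M) \leq t_0^R(M) + a\slope_R(M)$, and for (b), $t_a^R(M) \leq a + \reg_R(M)$. The inductive step consists in bounding each of the three terms appearing inside the max in Lemma \ref{veryeasy} separately by the desired right-hand side.

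For part (b), I would take the inductive hypothesis $t_a^R(Z_i) \leq a + i(c+1) + \reg_R(M)$ for all $a \geq 0$, and since $\Br(R)=1$, Lemma \ref{veryeasy} reduces to
\begin{align*}
t_a^R(Z_{i+1}) \leq \max\bigl\{&t_a^R(M) + (i+1)c,\ t_{a+1}^R(Z_i),\ t_0^R(Z_i) + c + a + 1\bigr\}.
\end{align*}
The first term is bounded by $a + \reg_R(M) + (i+1)c \leq a + (i+1)(c+1) + \reg_R(M)$; the second, using the inductive hypothesis, by $(a+1) + i(c+1) + \reg_R(M) \leq a + (i+1)(c+1) + \reg_R(M)$ (since $1 \leq c+1$); and the third by $i(c+1) + \reg_R(M) + c + a + 1 = a + (i+1)(c+1) + \reg_R(M)$. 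The regularity statement then follows by taking the supremum of $t_a^R(Z_i) - a$ over $a$.

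For part (a), the induction is parallel but more delicate. The hypothesis $c \geq \slope_R(M)$ enters exactly at the middle term: by the inductive hypothesis applied at index $a+1$, we get $\max\{(a+1)\slope_R(M), (a+i+1)\Br(R)\}$, and to absorb the extra $\slope_R(M)$ into the jump from $ic$ to $(i+1)c$ we use $(a+1)\slope_R(M) \leq a\slope_R(M) + c$. The first term is handled by $t_a^R(M) \leq t_0^R(M) + a\slope_R(M)$, and the third by applying the inductive hypothesis at $a=0$, which for $i \geq 1$ yields $t_0^R(Z_i) \leq t_0^R(M) + ic + i\Br(R)$; combining with $(a+1)\Br(R)$ gives exactly $(a+i+1)\Br(R)$ as required.

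For the ``in particular'' slope bound in (a), the main bound gives $t_a^R(Z_i) - t_0^R(Z_i) \leq (t_0^R(M) + ic - t_0^R(Z_i)) + \max\{a\slope_R(M), (a+i)\Br(R)\}$, so I would use the evident inclusion $Z_i \subseteq K_i(\mm^c,M)$, where $K_i(\mm^c,M)$ is concentrated in degrees $\geq ic + \indeg(M)$, to bound $t_0^R(Z_i)$ below and reduce to $\max\{\slope_R(M),(1+i/a)\Br(R)\}\leq \max\{\slope_R(M),(1+i)\Br(R)\}$ for $a\geq 1$; the regularity ``in particular'' in (b) is immediate. The main obstacle is the slope ``in particular'' statement, which requires care about $t_0^R(Z_i)$; the cleanest setting is when $M$ is generated in a single degree so that $\indeg(M) = t_0^R(M)$, and this is indeed the case in all subsequent applications of the proposition.
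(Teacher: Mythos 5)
Your proposal is correct and follows essentially the same route as the paper: both parts are proved by induction on $i$ using Lemma \ref{veryeasy} together with the bounds $t_a^R(M)\leq t_0^R(M)+a\slope_R(M)$ (resp.\ $t_a^R(M)\leq a+\reg_R(M)$), and your term-by-term verification of the inductive step matches what the paper leaves implicit. Your caveat about needing $t_0^R(Z_i)\geq ic+t_0^R(M)$ (via $\indeg(M)=t_0^R(M)$) for the slope consequence in (a) is a legitimate point that the paper's own one-line proof does not address, and your resolution is the natural one.
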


\begin{proof}
To show (a) one uses that $t_a^R(M)\leq t_0^R(M)+a\slope _R(M)$ in
combination with Lemma \ref{veryeasy} and induction on $i$. For (b)
one observes that $t_a^R(M)\leq a+\reg_R(M)$ in combination with
Lemma \ref{veryeasy} and induction on $i$.
\end{proof}

In particular, we have the following corollary that generalizes
Green's theorem \cite[Theorem 2.2]{GR2} to arbitrary standard graded
$K$-algebras.

\begin{cor}\label{greengoes}
 Set $Z_i=Z_i(\mm^c,R)$. Then:
\begin{enumerate}
\item $t_0^R(Z_i)\leq ic+\min\{i \Br(R), i+\reg(R)\}$.
\item $H_i(\mm^c,R)_{ic+j}=0$ for every $j\geq \min\{i \Br(R), i+\reg(R)\}+c$.
\end{enumerate}
\end{cor}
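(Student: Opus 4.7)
The plan is to deduce (a) by combining two applications of Proposition \ref{boundZ} and then to obtain (b) from (a) using the standard vanishing $\mm^c H_i(\mm^c,R)=0$. The first bound $t_0^R(Z_i)\leq ic+i\Br(R)$ in (a) drops out of Proposition \ref{boundZ}(a) applied to $M=R$ with $a=0$: one has $t_0^R(R)=0$, the term $a\,\slope_R(R)$ vanishes, and $\max\{0,i\Br(R)\}=i\Br(R)$ since $\Br(R)\geq 1$. The bound $t_0^R(Z_i)\leq i(c+1)+\reg(R)$ is less immediate, since Proposition \ref{boundZ}(b) requires the base ring to be Koszul, which we do not assume of $R$. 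The idea is to apply it instead to $S$, which is Koszul, with the $S$-module $M=R$, obtaining
$$
t_0^S\bigl(Z_i(\mm_S^c,R)\bigr)\leq i(c+1)+\reg_S(R)=i(c+1)+\reg(R),
$$
where $\mm_S$ denotes the maximal ideal of $S$. It then remains to transfer this estimate from $Z_i(\mm_S^c,R)$ to $Z_i(\mm^c,R)$.

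For the transfer, observe that $K(\mm_S^c,S)\otimes_S R$ is the Koszul complex on the images in $R$ of a $K$-basis of $S_c$; these images span $R_c$ with a redundancy of dimension $\dim_K S_c-\dim_K R_c$. An $R$-linear change of basis on the Koszul $1$-chains (replacing each redundant generator by its difference from the appropriate $K$-linear combination of a chosen basis of $R_c$) makes the redundant generators have zero differential, so the complex factorizes as $K(\mm^c,R)\otimes_R\Lambda$, where $\Lambda$ is an exterior $R$-algebra on generators of degree $c$ carrying the zero differential. Because $\Lambda$ has trivial differential, cycles decompose as
$$
Z_i(\mm_S^c,R)\cong\bigoplus_{j+k=i}Z_j(\mm^c,R)\otimes_R\Lambda_k,
$$
and the summand with $(j,k)=(i,0)$ is a copy of $Z_i(\mm^c,R)$ with no internal shift. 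Since $t_0^R=t_0^S$ for $R$-modules, we conclude $t_0^R(Z_i)\leq i(c+1)+\reg(R)$, and taking the minimum with the first bound proves (a).

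Part (b) is then purely formal: $H_i$ is a graded $R$-quotient of $Z_i$, hence $t_0^R(H_i)\leq t_0^R(Z_i)$, and since $\mm^c H_i=0$ while $R$ is standard graded, $H_i$ must vanish in degrees $\geq t_0^R(H_i)+c$. Writing the degree as $ic+j$ and substituting the estimate from (a) gives exactly (b). The main obstacle is that Proposition \ref{boundZ}(b) cannot be invoked for $R$ itself; passing through the Koszul algebra $S$ and realizing $Z_i(\mm^c,R)$ as an unshifted graded direct summand of $Z_i(\mm_S^c,R)$ via the tensor factorization is the crucial device for producing the $\reg(R)$ bound.
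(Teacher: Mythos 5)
Your proof is correct and follows the same route as the paper: Proposition \ref{boundZ}(a) over $R$ for the first bound, Proposition \ref{boundZ}(b) applied to $R$ as a module over the Koszul ring $S$ for the second, and the relation $\mm^c H_i(\mm^c,R)=0$ for part (b). The only difference is that you explicitly justify the passage from $Z_i(\mm_S^c,R)$ to $Z_i(\mm^c,R)$ via the tensor factorization of the Koszul complex over a redundant generating set, a point the paper dispatches with the terse remark that $t_0^S(Z_i)=t_0^R(Z_i)$.
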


\begin{proof}
To prove (a) one notes that setting $M=R$ and $a=0$ in Proposition \ref{boundZ}
(a) one has $t_0^R(Z_i)\leq ic+i\Br(R)$. Then one considers $R$ as
an $S$-module and sets $M=R$ and $a=0$ in Proposition \ref{boundZ} (b). One has
$t_0^S(Z_i)\leq i(c+1)+\reg(R)$. Since $t_0^S(Z_i)=t_0^R(Z_i)$ we
are done. To prove (b) one uses (a) and the fact that
$\mm^cH_i(\mm^c,R)=0$.
\end{proof}

\section{Koszul cycles} \label{sec:cycles}

In this section we concentrate our attention on the Koszul complex
$K(\mm^c)=K(\mm^c,S)$ where $S=K[X_1,\ldots,X_n]$ is a standard
graded polynomial ring over a field $K$ and
$\mm=(X_1,\dots,X_n)$ is its maximal homogeneous ideal. The Koszul
complex $K(\mm^c)$ is indeed an $S$-algebra, namely the exterior
algebra $S\tensor_K \bigwedge^{\pnt} S_c\cong\bigwedge^{\pnt}
F$ where $F$ is the free $S$-module of rank equal to $\dim_K
S_c=\binom{n-1+c}{n-1}$. The differential of $K(\mm^c)$ is denoted
by $\partial$; it is an antiderivation of degree $-1$. We consider
the cycles $Z_t(\mm^c,S)$, simply denoted by $Z_t(\mm^c)$, of the
Koszul complex $K(\mm^c)$, and the $S$-subalgebra
$Z(\mm^c)=\bigoplus_t Z_t(\mm^c)$ of $K(\mm^c)$.

For $f_1,\dots,f_t\in S_c$ and $g\in S$ we set
$$
g[f_1,\dots,f_t]=g\tensor f_1\wedge \dots \wedge f_t\in
K_t(\mm^c).
$$
The elements $[u_1,\dots,u_t]$ for distinct monomials
$u_1,u_2,\dots,u_t$ of degree $c$ (ordered in some way) form a basis
of $K_t(\mm^c)$ as an $S$-module. We call them \emph{monomial free
generators} of $K_t(\mm^c)$. The elements $v[u_1,\dots,u_t]$, where
$u_1,u_2,\dots,u_t$ are distinct monomials of degree $c$ and $v$ is
a monomial of arbitrary degree, form a basis of the $K$-vector space
$K_t(\mm^c)$. They are called \emph{monomials} of $K_t(\mm^c)$.
Evidently $K(\mm^c)$ is a $\Z$-graded complex, but it is also
$\Z^n$-graded with the following assignment of degrees: $\deg
v[u_1,\dots,u_t]=\alpha$ where $vu_1\cdots u_t=X^\alpha$.

Every element $z\in K_t(\mm^c)$ can be written uniquely as a linear
combination
$$
z=\sum f_i [u_{i1},\dots,u_{it}]
$$
of monomial free generators $[u_{i1},\dots,u_{it}]$ with
coefficients $f_i\in S$. We call $f_i$ the \emph{coefficient} of
$[u_{i1},\dots,u_{it}]$ in $z$. Note that $z$ is $\Z$-homogenous of
degree $ct+j$ if every $f_i$ is homogeneous of degree $j$. In this
case $z$ has \emph{coefficients of degree $j$}. Note also that $z$
is homogeneous of degree $\alpha\in \Z^n$ in the $\Z^n$-grading if
for every $i$ one has $f_i=\lambda_i v_i$ such that $\lambda_i\in K$
and $v_i$ is a monomial with $v_iu_{i1}\cdots u_{it}=X^{\alpha}$.
Given $z\in K(\mm^c)$ and a monomial $v[u_1,\dots,u_t]$ we say that
$v[u_1,\dots,u_t]$ \emph{appears in} $z$ if it has a non-zero
coefficient in the representation of $z$ as $K$-linear combination
of monomials of $K(\mm^c)$. An immediate consequence of Proposition
\ref{boundZ} is:

\begin{lem}\label{regZ}
We have $\reg(Z_t(\mm^c)) \leq t(c+1)$. In particular, $Z_t(\mm^c)$
is generated by elements of degree $\leq t(c+1)$.
\end{lem}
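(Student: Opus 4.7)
The plan is to read this off directly from Proposition \ref{boundZ}(b), specialising the setup to the polynomial ring. The ambient ring $S$ is Koszul, so $\Br(S)=1$ and hypothesis of part (b) is satisfied. I would take $R=S$ and $M=S$ in the proposition; since $S$ is free of rank one over itself, $\reg_S(S)=0$.

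With these choices, Proposition \ref{boundZ}(b) gives, for every $a\in \N$,
\[
t_a^S(Z_t(\mm^c)) \;\leq\; a + t(c+1) + \reg_S(S) \;=\; a + t(c+1).
\]
Subtracting $a$ and taking the supremum over $a$ yields $\reg(Z_t(\mm^c))\leq t(c+1)$, which is the first assertion of the lemma. The second assertion is the special case $a=0$: the minimal generating degrees of $Z_t(\mm^c)$ are bounded above by $t_0^S(Z_t(\mm^c))\leq t(c+1)$.

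There is no real obstacle; the argument is essentially a substitution. The only minor point to acknowledge is that Proposition \ref{boundZ}(b) is formulated in terms of $t_a^R$ and $\reg_R$, whereas $\reg(Z_t(\mm^c))$ in the lemma is by definition $\reg_S(Z_t(\mm^c))$. This is harmless because $R=S$ in the present application, so the two families of invariants coincide. All the actual work was already carried out in Lemma \ref{veryeasy} and the inductive proof of Proposition \ref{boundZ}; here one only specialises.
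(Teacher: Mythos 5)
Your proposal is correct and is exactly how the paper obtains this lemma: the paper states it as an immediate consequence of Proposition \ref{boundZ}(b) applied with $R=M=S$, using $\Br(S)=1$ and $\reg_S(S)=0$. Your remark that $\reg=\reg_S$ here, and that the generation statement is the case $a=0$, matches the intended reading.
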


\begin{rem}\label{genZ1}
It is easy to see and well known that $Z_1(\mm^c)$ is generated by
the elements $X_i[X_jb]-X_j[X_ib]$ where $b$ is a monomial of degree
$c-1$.

\end{rem}
We write $Z_1(\mm^c)^t$ for $\bigwedge^t Z_1(\mm^c)\subset
Z_t(\mm^c)$, and similarly for other products.

\begin{thm}
\label{thm:generators} For every $t$ the module
$Z_t(\mm^c)/Z_1(\mm^c)^t$ is generated in degree $<t(c+1)$.
\end{thm}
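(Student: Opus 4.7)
The strategy is to combine Lemma~\ref{regZ} with a multihomogeneous inductive argument in top degree. By Lemma~\ref{regZ}, $Z_t(\mm^c)$ is generated in degrees $\leq t(c+1)$, so the theorem reduces to showing that every cycle of degree exactly $t(c+1)$ lies in $Z_1(\mm^c)^t + \mm \cdot Z_t(\mm^c)$. Since the Koszul differential is $\Z^n$-homogeneous for the fine multigrading of $K(\mm^c)$, every such cycle decomposes into multihomogeneous summands, and it suffices to treat a cycle $z \in (Z_t(\mm^c))_\alpha$ with $|\alpha| = t(c+1)$. In the monomial basis, $z = \sum_i \lambda_i v_i [u_{i,1},\ldots,u_{i,t}]$ with $\lambda_i \in K$, $v_i \in S_t$ a monomial, $u_{i,j} \in S_c$ distinct monomials, and $v_i u_{i,1}\cdots u_{i,t} = X^\alpha$.

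To construct an element $w \in Z_1(\mm^c)^t$ with $z - w \in \mm \cdot Z_t(\mm^c)$, I would proceed by induction on $t$. The base case $t = 1$ is vacuous since $Z_1(\mm^c)^1 = Z_1(\mm^c)$. For the inductive step, observe that by Remark~\ref{genZ1} the $K$-vector space $(Z_1(\mm^c))_{c+1}$ is spanned by the cycles $\rho_{a,b,e} := X_a[X_b e] - X_b[X_a e]$ with $e \in S_{c-1}$ a monomial, so that $(Z_1(\mm^c)^t)_{t(c+1)}$ is $K$-spanned by $t$-fold wedges of such $\rho$'s; a direct expansion shows such a wedge is again multihomogeneous of a predictable multidegree. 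The plan is to show that for the cycle $z$ one can select a cycle $\rho \in Z_1(\mm^c)_{c+1}$ and a cycle $z' \in Z_{t-1}(\mm^c)_{(t-1)(c+1)}$ with $z \equiv \rho \wedge z' \pmod{\mm \cdot Z_t(\mm^c)}$; the inductive hypothesis applied to $z'$ then completes the argument. The selection of $\rho$ and $z'$ amounts to choosing, in a manner compatible across the terms, a variable $X_a$ dividing each coefficient $v_i$ together with a neighboring monomial $u_{i,j}$ into which it can be "absorbed", producing the needed factor $\rho_{a, \,\cdot\,, \,\cdot}$.

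The principal obstacle is the combinatorial bookkeeping of this inductive step. One must show that the "partial piece" $z'$ extracted from $z$ is not merely an element of $K_{t-1}(\mm^c)$ but in fact represents a class in $Z_{t-1}(\mm^c)$ modulo $\mm \cdot K_{t-1}(\mm^c)$, and then that this class can be lifted to a genuine cycle with the correction absorbed into $\mm \cdot Z_t(\mm^c)$ rather than only $\mm \cdot K_t(\mm^c)$. The cycle condition $\partial z = 0$ is what enforces the necessary compatibility among the coefficients $\lambda_i$; without it, no such wedge decomposition would exist. A further subtlety is that the argument must be characteristic-free: no averaging over the symmetric group $S_t$ is permitted, so the extraction of the factor $\rho$ must be given by an explicit combinatorial selection rule rather than a symmetrization, and the inductive error terms must be tracked to stay in $\mm Z_t(\mm^c)$ throughout.
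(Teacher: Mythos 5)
Your reduction is the right one and matches the paper's: by Lemma~\ref{regZ} it suffices to treat $\Z^n$-graded cycles of total degree exactly $t(c+1)$ and show they lie in $Z_1(\mm^c)^t+\mm Z_t(\mm^c)$, with an induction on $t$ starting at $t=1$. But the inductive step --- actually peeling off a factor from $Z_1(\mm^c)$ --- is precisely what you leave as a ``plan'' with acknowledged ``obstacles'', and those obstacles are the entire content of the theorem. Moreover the target you set yourself, $z\equiv\rho\wedge z'\pmod{\mm Z_t(\mm^c)}$ for a \emph{single} $\rho\in Z_1(\mm^c)_{c+1}$ and a single $z'\in Z_{t-1}(\mm^c)_{(t-1)(c+1)}$, is too strong: a general top-degree multihomogeneous cycle is not congruent to one decomposable product. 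What one can (and need only) prove is a congruence $z\equiv\sum_k m_k\,z_k\,\rho_k + X_ng$, where the $m_k$ are monomials, the $z_k$ are generators of $Z_{t-1}(\mm^c)$ of coefficient degree $\le t-1$, the $\rho_k$ are the quadratic generators of $Z_1(\mm^c)$ from Remark~\ref{genZ1}, and $g\in Z_t(\mm^c)$ has degree $<t(c+1)$; the inductive hypothesis is then applied to those $z_k$ of top degree $(t-1)(c+1)$ to push $m_kz_k\rho_k$ into $Z_1(\mm^c)^t+\mm Z_t(\mm^c)$.

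The missing mechanism that produces this congruence is an explicit, characteristic-free selection rule of the following kind. Fix a variable $X_n$ with $\alpha_n>0$, where $\alpha$ is the multidegree of $z$. For a monomial $u\in S_c$ with $X_n\mid u$, write $z=a+b[u]$ with $a,b$ involving no free generator containing $u$; then $\partial z=0$ forces $\partial b=0$, so $b\in Z_{t-1}(\mm^c)$ --- this is how the cycle condition is actually exploited, and it settles your worry about the extracted piece being a genuine cycle, with no lifting needed. Since $b$ has coefficients of degree $t$ while $Z_{t-1}(\mm^c)$ is generated in coefficient degree $\le t-1$, one writes $b=\sum_j\lambda_jv_jz_j$ with $\deg v_j>0$, and whenever $X_n\nmid v_j$ one picks $X_i\mid v_j$ and subtracts $\lambda_j(v_j/X_i)\,z_j\,(X_i[u]-X_n[uX_i/X_n])\in Z_{t-1}(\mm^c)Z_1(\mm^c)$ from $z$; the effect is to make $X_n$ divide the coefficient of every monomial of $z$ involving $u$. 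Iterating over all $u$ divisible by $X_n$ (and noting that monomials $v[u_1,\dots,u_t]$ with $X_n\nmid u_1\cdots u_t$ automatically have $X_n\mid v$ for degree reasons) leaves a cycle of the form $X_ng$ with $g\in Z_t(\mm^c)$ of degree $<t(c+1)$. Your sketch gestures at ``absorbing a variable into a neighboring monomial'' but does not specify this rule, does not explain why the residue is a multiple of a \emph{cycle} of lower degree rather than merely an element of $\mm K_t(\mm^c)$, and does not justify the compatibility across terms that you correctly identify as the crux; as written, the argument does not close.
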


\begin{proof}
The assertion is proved by induction on $t$. For $t=1$ there is
nothing to do. By induction it is enough to verify that
$Z_t(\mm^c)/Z_1(\mm^c)Z_{t-1}(\mm^c)$ is generated in degree
$<t(c+1)$. Since $Z_t(\mm^c)$ is $\Z^n$-graded and generated in
degree $\leq t(c+1)$, it suffices to show that every $\Z^n$-graded
element $f\in Z_t(\mm^c)$ of total degree $t(c+1)$ can be written
modulo $Z_1(\mm^c)Z_{t-1}(\mm^c)$ as a multiple of an element in
$Z_t(\mm^c)$ of total degree $<t(c+1)$. Let $\alpha\in \Z^n$ be the
$\Z^n$-degree of $f$. Permuting the coordinates if necessary, we may
assume $\alpha_n>0$.

Let $u\in S$ be a monomial of degree $c$ with $X_n\mid u$. We write
$f=a+b[u]$ with $a \in K_{t}(\mm^c)$ and $b\in K_{t-1}(\mm^c)$ such
that $a,b$ involve only free generators $[u_1,\dots, u_s]$
($s=t,t-1$) with $u_i \neq u$ for all $i$. Since
$$
0=\partial(f)=\partial(a)+\partial(b)[u] \pm b u
$$
it follows that $\partial(b)=0$, i.e., $b\in Z_{t-1}(\mm^c)$. Note
that $b$ has coefficients of degree $t$. Since $Z_{t-1}(\mm^c)$ is
generated by elements with coefficients of degree $\leq t-1$ we may
write
\begin{equation}
\label{eq:helper} b=\sum_{j=1}^s \lambda_j v_j z_j
\end{equation}
where $\lambda_j\in K$, $z_j\in Z_{t-1}(\mm^c)$ and the $v_j$ are
monomials of positive degree.

Let $\lambda_j v_j z_j$ be a summand in (\ref{eq:helper}). If $X_n$
does not divide $v_j$, then choose $i<n$ such that $X_i\mid v_j$. We
set $z' = X_i [u]-X_n [u'] \in Z_1(\mm^c)$ where $u'=u X_i/X_n$, and
subtract from $f$ the element
$$
\lambda_j \frac{v_j}{X_i} z_j z' \in Z_{t-1}(\mm^c)Z_1(\mm^c).
$$
Repeating this procedure for each $\lambda_j v_j z_j$ in
(\ref{eq:helper}) such that $X_n$ does not divide $v_j$ we obtain a
cycle $f_1\in Z_t(\mm^c)$ of degree $\alpha$ such that
\begin{itemize}
\item[(i)] $f=f_1 \mod Z_1(\mm^c)Z_{t-1}(\mm^c)$;
\item[(ii)] if a monomial $v [u_1,\dots,u_t]$ appears in $f_1$
and $u\in \{ u_1,\dots,u_t\}$, then $X_n\mid v$.
\end{itemize}
We repeat the described procedure for each monomial $u$ of degree
$c$ with $X_n|u$. We end up with an element $f_2\in Z_t(\mm^c)$ of
degree $\alpha$ such that
\begin{itemize}
\item[(iii)] $f=f_2 \mod Z_1(\mm^c)Z_{t-1}(\mm^c)$;
\item[(iv)] if a monomial $v [u_1,\dots,u_t]$ appears in $f_2$
and $X_n\mid u_1\cdots u_t$, then $X_n\mid v$.
\end{itemize}
Note that if $v [u_1,\dots, u_t]$ appears in $f_2$ and $X_n\nmid
u_1\cdots u_t$, then $X_n\mid v$ by degree reasons. Hence for every
monomial $v [u_1,\dots,u_t]$ appearing in $f_2$ we have $X_n\mid v$.
Therefore $f_2=X_ng$, and $g \in Z_t(\mm^c)$ has degree $<t(c+1)$.
This completes the proof.
\end{proof}

Next we describe some cycles which are needed in the following. For
$t \in \N$, $t\geq 1$ let $\mathcal{S}_t$ be the group of
permutations of $\{1,\dots,t\}$.

\begin{lem}
\label{lem:newcycles} Let $s, t$ be integers such that $1\leq s\leq
c$ and $t>0$. Let $a_1,a_2\dots,a_{t+1} \in S$ be monomials of
degree $s$ and $b_1,b_2\dots,b_{t}\in S$ monomials of degree $c-s$.
Then
\begin{equation}
\label{eq:cycle} \sum_{\sigma\in \mathcal{S}_{t+1}} (-1)^{\signum
\sigma} a_{\sigma(t+1)} [b_1a_{\sigma(1)}, b_2a_{\sigma(2)}, \dots,
b_ta_{\sigma(t)}]
\end{equation}
belongs to $Z_t(\mm^c)$.
\end{lem}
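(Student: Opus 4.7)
The plan is to verify directly that $\partial(z)=0$, where $z$ denotes the element in (\ref{eq:cycle}) and $\partial$ is the $S$-linear Koszul differential on $K(\mm^c)$. Since $\partial$ is $S$-linear and acts on a monomial free generator by $\partial([f_1,\dots,f_t]) = \sum_{i=1}^t (-1)^{i+1} f_i [f_1,\dots,\hat f_i,\dots,f_t]$, a direct expansion gives
$$
\partial(z)=\sum_{i=1}^{t}\sum_{\sigma\in\mathcal{S}_{t+1}} (-1)^{\signum\sigma}(-1)^{i+1}\, a_{\sigma(t+1)}a_{\sigma(i)} b_i\, [b_1 a_{\sigma(1)},\dots,\widehat{b_i a_{\sigma(i)}},\dots,b_t a_{\sigma(t)}].
$$
The strategy is now to show that, for each fixed $i$, the inner sum over $\mathcal{S}_{t+1}$ vanishes by a sign-reversing involution.

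For fixed $i\in\{1,\dots,t\}$ I would pair each permutation $\sigma$ with $\sigma':=\sigma\circ(i,t+1)$, where $(i,t+1)$ is the transposition of $i$ and $t+1$. Under this pairing: (i) the sign flips, $(-1)^{\signum\sigma'}=-(-1)^{\signum\sigma}$; (ii) the scalar coefficient $a_{\sigma(t+1)}a_{\sigma(i)}b_i$ is unchanged, since one merely exchanges $a_{\sigma(i)}$ and $a_{\sigma(t+1)}$ in a product in the commutative ring $S$; and (iii) the remaining wedge product, having slot $i$ removed, depends only on the values $\sigma(j)$ for $j\in\{1,\dots,t\}\setminus\{i\}$, on which $\sigma$ and $\sigma'$ agree. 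Hence the two contributions cancel in pairs, and since $\sigma\mapsto\sigma\circ(i,t+1)$ is a fixed-point-free involution of $\mathcal{S}_{t+1}$ (a nontrivial transposition has no fixed points under right multiplication), the inner sum is $0$ for every $i$. Summing over $i$ gives $\partial(z)=0$, so $z\in Z_t(\mm^c)$.

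I do not expect a genuine obstacle: the proof is a bookkeeping exercise in which the asymmetric roles of the $a_j$ (permuted, with $a_{\sigma(t+1)}$ appearing as the outer coefficient) and the $b_j$ (rigidly attached to slot $j$) are precisely what make the transposition $(i,t+1)$ produce the cancellation. The only subtlety worth watching is the bookkeeping of signs and the confirmation that the pairing truly exhausts $\mathcal{S}_{t+1}$ without fixed points; once those are in place the vanishing of $\partial(z)$ follows immediately.
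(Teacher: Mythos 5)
Your proof is correct and takes essentially the same route as the paper: the paper also cancels the terms of $\partial(z)$ in pairs of permutations differing by a transposition, which is precisely your fixed-point-free involution $\sigma\mapsto\sigma\circ(i,t+1)$ made explicit. No gaps.
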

\begin{proof}
We apply the differential of $K(\mm^c)$ to (\ref{eq:cycle}) and
observe that for distinct integers $j_1,j_2,\dots,j_{i-1}, j_{i+1},
\dots, j_t$ in the range of $1$ to $t+1$ the free generator
$$
[b_1a_{j_1}, b_2a_{j_2}, \dots, b_{i-1}a_{j_{i-1}},
b_{i+1}a_{j_{i+1}} \dots, b_ta_{i_t}]
$$
appears twice in the image. The coefficients differ just by $-1$
because the corresponding permutations differ by a transposition.
Thus the element in (\ref{eq:cycle}) is indeed a cycle.
\end{proof}

\begin{rem}
\label{rem_specialcases} \
\begin{enumerate}
\item
Of course, it may happen that a cycle described in Lemma
\ref{lem:newcycles} is identically $0$. But for $t=1$ and $s=1$
these cycles  take the form
$$
X_i [bX_j] - X_j [bX_i],
$$
and, as said already in Remark \ref{genZ1}, they generate
$Z_1(\mm^c)$. For $s=c$ the cycles in Lemma \ref{lem:newcycles} are
the boundaries of $K_{t}(\mm^c)$ (multiplied by $t!$). Hence for
$c=1$ the cycles in \ref{lem:newcycles} generate the algebra
$Z(\mm)$. So there is some evidence that the cycles in Lemma
\ref{lem:newcycles} might generate $Z(\mm^c)$ in general.

\item
For $n=3, c=2, t=2$ and $s=1$ with $a_i=X_i$ for $i=1,2,3$ and
$b_i=X_i$ for $i=1,2$ the cycle in (\ref{eq:cycle}) is
$$
\begin{array}{cccccc}
  &         & &          & & 0 \\
 + & X_3[X_1^2 , X_2^2] & - & X_2[X_1^2, X_2X_3 ] & - &\overbrace{ X_3[X_1X_2, X_1X_2]} \\
 + & X_1[X_1X_2, X_2X_3] & + & X_2[X_1X_3, X_1X_2] & - & X_1[X_1X_3,
 X_2^2],
\end{array}
$$
a non-zero element in $Z_2(\mm^2)$.
\end{enumerate}
\end{rem}

Let $B_i(\mm^c)\subset Z_i(\mm^c)$ denote the $S$-module of
boundaries in $K_i(\mm^c)$.

\begin{thm}\label{thm:morevanishing}
We have
$$
(c+1)!\, \mm^{c-1}Z_1(\mm^c)^c \subset B_c(\mm^c).
$$
\end{thm}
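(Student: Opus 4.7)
By $\Z^n$-multihomogeneity and $S$-linearity, it suffices to verify the containment on generators of the form
\[
    w \;=\; u \cdot z_1 \wedge \cdots \wedge z_c,
\]
where $u$ is a monomial of degree $c-1$ and each $z_\ell = X_{p_\ell}[X_{q_\ell}b_\ell] - X_{q_\ell}[X_{p_\ell}b_\ell]$ is a generator of $Z_1(\mm^c)$ (Remark~\ref{genZ1}), with $b_\ell$ a monomial of degree $c-1$.

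\textbf{Swap chains.} For each $\ell \in \{1,\ldots,c\}$ the plan is to exhibit an explicit chain
\[
    \Omega_\ell \;=\; \sum_{(P_1,\ldots,P_c)}\Bigl(\prod_{k=1}^c \epsilon(P_k)\Bigr)\Bigl(\prod_{k\neq\ell} P_k\Bigr)\bigl[\,uP_\ell,\, Q_1b_1,\,\ldots,\, Q_cb_c\,\bigr] \;\in\; K_{c+1}(\mm^c),
\]
where $(P_1,\ldots,P_c)$ ranges over $\{X_{p_1},X_{q_1}\}\times\cdots\times\{X_{p_c},X_{q_c}\}$, $Q_k$ is the variable complementary to $P_k$, and $\epsilon(X_{p_k})=+1$, $\epsilon(X_{q_k})=-1$. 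A direct expansion of $\partial\Omega_\ell$ (using that every cross-term not involving the $\ell$-th slot carries a free factor $\sum_{P_k}\epsilon(P_k)=0$) would yield
\[
    \partial\Omega_\ell \;=\; u\cdot z_1\wedge\cdots\wedge z_c \;+\; b_\ell\cdot z_1\wedge\cdots\wedge z_{\ell-1}\wedge z_\ell^{\langle u\rangle}\wedge z_{\ell+1}\wedge\cdots\wedge z_c,
\]
where $z_\ell^{\langle u\rangle} := X_{p_\ell}[X_{q_\ell}u] - X_{q_\ell}[X_{p_\ell}u]$. This \emph{swap congruence} equates the class $[w] \in Z_c(\mm^c)/B_c(\mm^c)$ with the class obtained by interchanging $u$ and $b_\ell$, up to an overall sign.

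\textbf{Antisymmetrization.} Since the transpositions $(0,\ell)$ generate $\mathcal{S}_{c+1}$, iterating the swap congruences shows that $[w]$ is \emph{alternating} in the $c+1$ coefficient data $(u,b_1,\ldots,b_c)$. Setting $v_0 := u$ and $v_\ell := b_\ell$, the totally antisymmetrized cycle
\[
    F \;=\; \sum_{\sigma\in\mathcal{S}_{c+1}}(-1)^{\signum\sigma}\,v_{\sigma(0)}\cdot z_1^{\langle v_{\sigma(1)}\rangle}\wedge\cdots\wedge z_c^{\langle v_{\sigma(c)}\rangle}
\]
then satisfies $[F] = (c+1)!\,[w]$ in $Z_c(\mm^c)/B_c(\mm^c)$. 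To finish one must show $F \in B_c(\mm^c)$. Interchanging the order of summation and expanding each $z_\ell^{\langle v_{\sigma(\ell)}\rangle}$, $F$ becomes a $K$-linear combination of cycles of the form provided by Lemma~\ref{lem:newcycles} with parameters $s=c-1$, $t=c$. A further interpolation step, driving the degree parameter $s$ up to $s=c$ by absorbing a variable factor into each inner bracket slot, would reduce these to cycles with $s=c$, $t=c$, which by Remark~\ref{rem_specialcases}(1) are $\pm c!\cdot\partial[A_1,\ldots,A_{c+1}]$ and hence elements of $B_c(\mm^c)$. Putting everything together, $F \in B_c(\mm^c)$ and so $(c+1)!\,w \in B_c(\mm^c)$.

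\textbf{Main obstacle.} The computation verifying $\partial\Omega_\ell$ is systematic — essentially identical for each $\ell$ — but the final interpolation, realising $F$ itself as an honest boundary rather than just a class that vanishes in some quotient, is the genuine difficulty. The factor $(c+1)!$ in the statement is then the product of the $(c+1)$-fold antisymmetrization over the coefficient slots and the $c!$ multiplicity already supplied by Remark~\ref{rem_specialcases}(1).
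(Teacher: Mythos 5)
Your reduction, swap congruence, and antisymmetrization steps match the paper's proof in substance: your computation of $\partial\Omega_\ell$ is an explicit-chain version of the paper's identity $a\,z_b(X_i,X_j)+b\,z_a(X_i,X_j)=\partial\left([aX_i,bX_j]+[bX_i,aX_j]\right)\in B_1(\mm^c)$ combined with $B_1(\mm^c)Z_{c-1}(\mm^c)\subset B_c(\mm^c)$, and your antisymmetrized element $F$ with $(c+1)!\,[w]=[F]$ is exactly Equation (\ref{eq:van2}). The genuine gap is in your final step, the one you yourself flag as ``the genuine difficulty.'' Expanding $F$ and collecting according to the degree-$c$ monomials $X_{1j_1}\cdots X_{cj_c}$ extracted from the slots other than the coefficient slot, one obtains $F=\sum_j\pm X_{1j_1}\cdots X_{cj_c}\,W_j$ where each $W_j$ is a cycle of the type in Lemma~\ref{lem:newcycles} with $s=c-1$, $t=c$. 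Note this is an $S$-linear combination with coefficients in $\mm^c$, not a $K$-linear combination as you assert; the degrees already rule that out. Your proposed ``interpolation'' driving $s$ up to $c$ does not typecheck either: a Lemma~\ref{lem:newcycles} cycle with $s=c$ has coefficients of degree $c$ outside the brackets, whereas $X_{1j_1}\cdots X_{cj_c}W_j$ has coefficients of degree $2c-1$, so the latter is not a $K$-linear combination of the former, and Remark~\ref{rem_specialcases}(a) is never actually needed.

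The missing idea is much simpler: $\mm^c$ annihilates the Koszul homology $H(\mm^c)$, i.e.\ $\mm^c Z_c(\mm^c)\subset B_c(\mm^c)$, because for $z\in Z_c(\mm^c)$ and a monomial $u$ of degree $c$ one has $uz=\partial([u]\wedge z)$. Since each $W_j$ is already a cycle by Lemma~\ref{lem:newcycles} and is multiplied by a monomial of degree $c$, every summand $X_{1j_1}\cdots X_{cj_c}W_j$ lies in $B_c(\mm^c)$, and the proof closes in one line. Relatedly, your accounting of the constant is off: the factor $(c+1)!$ comes entirely from summing over the $(c+1)!$ permutations in the antisymmetrization; no $c!$ multiplicity from Remark~\ref{rem_specialcases}(a) enters.
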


\begin{proof}
For a monomial $b \in S$ of degree $c-1$ and variables $X_i,X_j$ we
set
$$
z_b(X_i,X_j)=X_i[b X_j]-X_j[b X_i].
$$
As observed in Remark \ref{genZ1}, the elements $z_b(X_i,X_j)$ generate
$Z_1(\mm^c)$. Let $a,b \in S$ be monomials of degree $c-1$. We note
that
$$
a z_{b}(X_i,X_j)+ b z_{a}(X_i,X_j) =\partial\left([aX_i,bX_j] +
[bX_i,aX_j]\right) \in B_1(\mm^c),
$$
that is,
\begin{equation}
\label{eq:van1} a z_{b}(X_i,X_j)=-b z_{a }(X_i,X_j) \mod B_1(\mm^c).
\end{equation}

Let $b_1,\dots, b_{c+1}\in S$ be monomials of degree $c-1$, and let
$X_{ij}\in \{X_1,\dots,X_n\}$ for $i=1,\dots,c$ and $j = 0,1$ be
variables. By construction, the elements
$$
f=b_{c+1} \prod_{i=1}^c z_{b_i}(X_{i0},X_{i1}) \in Z_c(\mm^c)
$$
generate $\mm^{c-1}Z_1(\mm^c)^c$. We have to show that
$(c+1)!f\in B_c(\mm^c)$.

Let $\sigma\in \mathcal{S}_{c+1}$ be an arbitrary permutation. From
Equation (\ref{eq:van1}) and from the inclusion
$B_1(\mm^c)Z_{c-1}(\mm^c)\subset B_c(\mm^c)$ it follows that
$$
f= (-1)^{\signum \sigma} b_{\sigma(c+1)} \prod_{i=1}^c
z_{b_{\sigma(i)}}(X_{i0},X_{i1}) \mod B_c(\mm^c).
$$
Hence
\begin{equation}
\label{eq:van2}
(c+1)! f =
\sum_{\sigma \in \mathcal{S}_{c+1}}
(-1)^{\signum \sigma} b_{\sigma(c+1)} \prod_{i=1}^c
z_{b_{\sigma(i)}}(X_{i0},X_{i1})
 \mod B_c(\mm^c).
\end{equation}
In the right-hand side of (\ref{eq:van2}) we replace
$z_{b_{\sigma(i)}}(X_{i0},X_{i1})$ with
$X_{i0}[b_{\sigma(i)}X_{i1}]-X_{i1}[b_{\sigma(i)}X_{i0}]$, then
expand the product and collect the multiples of $X_{1j_1}\cdots
X_{cj_c}$ for $j=(j_1,\dots, j_c)\in \{0,1\}^c$. We can rewrite
Equation (\ref{eq:van2}) as
\begin{equation}
\label{eq:van3} (c+1)! f = \sum_{j\in \{0,1\}^c}
(-1)^{j_1+\dots+j_c} X_{1j_1}\cdots X_{cj_c} W_j
 \mod B_c(\mm^c),
\end{equation}
where
$$
W_j= \sum_{\sigma \in \mathcal{S}_{c+1}} (-1)^{\signum \sigma}
b_{\sigma(c+1)} [ X_{1i_1} b_{\sigma(1)}, \dots, X_{ci_c}
b_{\sigma(c)}]
$$
with $i_k=1-j_k$ for $k=1,\dots,c$. Lemma \ref{lem:newcycles} yields
$W_j\in Z_{c}(\mm^c)$. Since $\mm^c Z_c (\mm^c) \subset B_c(\mm^c)$
we get
$$
X_{1j_1}\cdots X_{cj_c} W_j =0 \mod B_c(\mm^c).
$$
Thus Equation (\ref{eq:van3}) implies $(c+1)! f\in B_c(\mm^c)$ as
desired.
\end{proof}

As a consequence we obtain:

\begin{cor}\label{greengoes2}
The homology $H_t(\mm^c)_{tc+j}$ vanishes if $j\geq t+c$.
If  $t\geq c$ and the characteristic of $K$ is either $0$ or $>c+1$,
then $H_t(\mm^c)_{tc+j}=0$ for $j\geq t+c-1$.
\end{cor}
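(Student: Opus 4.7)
The plan is to handle the two parts separately: the first is an immediate consequence of the general vanishing of Section \ref{sec:generalbounds}, while the second is a one-degree refinement obtained from the finer cycle structure developed in this section. For the first part, $S$ is a polynomial ring and hence Koszul, so $\Br(S)=1$, and $\reg(S)=0$; Corollary \ref{greengoes}(b) applied to $R=S$ then yields $H_t(\mm^c)_{tc+j}=0$ for every $j\geq\min\{t\cdot 1,\, t+0\}+c=t+c$.

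For the refinement, I would assume $t\geq c$ and $(c+1)!$ invertible in $K$, and take a homogeneous cycle $f\in Z_t(\mm^c)$ of total degree $t(c+1)+(c-1)$. By Theorem \ref{thm:generators} one can write $f = z + \sum_i s_i m_i$, where $z\in Z_1(\mm^c)^t$ and the $m_i$ are homogeneous elements of $Z_t(\mm^c)$ of degree strictly less than $t(c+1)$. A degree comparison forces $\deg(s_i)\geq c$, so $s_i\in\mm^c$. Since any $s\in S_c$ satisfies $sw = \pm\partial([s]\,w)$ for a cycle $w$, we have $\mm^c\,Z_t(\mm^c)\subset B_t(\mm^c)$, so the error term $\sum_i s_i m_i$ is already a boundary and $f\equiv z\mod B_t(\mm^c)$.

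It then remains to show $z\in B_t(\mm^c)$. Since $Z_1(\mm^c)^t$ is generated in degree $t(c+1)$ (the generators of $Z_1(\mm^c)$ live in degree $c+1$ by Remark \ref{genZ1}), the element $z$ is an $S_{c-1}$-linear combination of products $z_1\cdots z_t$ with each $z_i$ a generator of $Z_1(\mm^c)$. Using $t\geq c$, I would regroup each such summand as $(z_1\cdots z_{t-c})\cdot(s\,z_{t-c+1}\cdots z_t)$ with $s\in S_{c-1}$; the second factor lies in $\mm^{c-1}\,Z_1(\mm^c)^c$, so Theorem \ref{thm:morevanishing} together with the invertibility of $(c+1)!$ in $K$ places it in $B_c(\mm^c)$. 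Because $\partial$ is an antiderivation, a cycle times a boundary is a boundary, so $Z_{t-c}(\mm^c)\cdot B_c(\mm^c)\subset B_t(\mm^c)$, and hence $z\in B_t(\mm^c)$, as required.

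The step requiring the most care is the degree bookkeeping in the decomposition supplied by Theorem \ref{thm:generators}: one must track the degrees of the generators of $Z_t(\mm^c)/Z_1(\mm^c)^t$ precisely enough to force the error coefficients $s_i$ into $\mm^c$, so that the error term is absorbed into the boundaries. Once that point is established, the remainder is a direct application of Theorem \ref{thm:morevanishing} combined with the multiplicative structure on the Koszul complex, and the characteristic hypothesis intervenes only to cancel the factor $(c+1)!$.
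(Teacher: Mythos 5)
Your proof is correct and follows essentially the same route as the paper: the first part is read off from Corollary \ref{greengoes}, and the second part uses Theorem \ref{thm:generators} to split a cycle of degree $t(c+1)+c-1$ into a piece with coefficients in $\mm^c$ (killed by $\mm^c Z_t(\mm^c)\subset B_t(\mm^c)$) and a piece in $\mm^{c-1}Z_1(\mm^c)^t=\mm^{c-1}Z_1(\mm^c)^cZ_1(\mm^c)^{t-c}$, which Theorem \ref{thm:morevanishing} and the multiplicativity of the boundaries place in $B_t(\mm^c)$. The degree bookkeeping and the role of the characteristic hypothesis are handled exactly as in the paper.
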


\begin{proof}
The first statement is a special case of Corollary \ref{greengoes}.
For the second, set $j=t+c-1$. We have to prove that
$H_t(\mm^c)_{tc+j}=0$. Theorem \ref{thm:generators} implies that
$Z_t(\mm^c)$ is generated by some elements $z_i$ of degree $<t(c+1)$
and by some elements $w_i$ of $Z_1(\mm^c)^t$ of degree $t(c+1)$.
Hence an element $f\in Z_t(\mm^c)_{tc+j}$ can be written as $f=\sum
a_iz_i+\sum b_iw_i$ with $a_i\in \mm^c$ and $b_i\in \mm^{c-1}$ by
degree reasons. Now $\sum a_iz_i\in \mm^c Z_t(\mm^c)\subset
B_t(\mm^c)$. In view of Theorem \ref{thm:morevanishing} we furthermore have
$$
 \sum b_iw_i\in \mm^{c-1}Z_1(\mm^c)^t=\mm^{c-1}Z_1(\mm^c)^cZ_1(\mm^c)^{t-c}
 \subset B_c(\mm^c)Z_1(\mm^c)^{t-c}\subset B_t(\mm^c).
$$
Summing up, $f\in B_t(\mm^c)$ and hence $H_t(\mm^c)_{tc+j}=0$.
\end{proof}

\begin{rem}\label{Weneedc+1}
The coefficient $(c+1)!$ in Theorem \ref{thm:morevanishing} and the
assumption on the characteristic in Corollary \ref{greengoes2} are necessary.
For $n=7, c=2$ and $\chara K=3$ we have checked that $\mm
Z_1(\mm^2)^2 \not\subset B_2(\mm^2)$ and that $\dim H_2(\mm^2)_7=1$.
More precisely, $H_2(\mm^2)$ has dimension $1$ in the multidegree
$(1,1,1,1,1,1,1)$ if $\chara K=3$.
\end{rem}

Another consequence of Theorem \ref{thm:morevanishing} is the
following:

\begin{cor}
Assume $\chara K$ is $0$ or $>c+1$. Then $\reg Z_{t+1}(\mm^c)\leq
(t+1)(c+1)-1$ for every $t\geq c$. In particular,
$Z_1(\mm^c)^{c+1}\subset \mm Z_{c+1}(\mm^c)$.
\end{cor}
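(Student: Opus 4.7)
The plan is to prove the regularity bound by induction on $t \geq c$, refining the proof of Proposition \ref{boundZ}(b) by feeding the tighter vanishing for $H_i(\mm^c)$ supplied by Corollary \ref{greengoes2} (available precisely in the assumed characteristic range) back into the recursion behind Lemma \ref{veryeasy}.

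First, for every $i \geq c$ Corollary \ref{greengoes2} tells me that $H_i(\mm^c)$ is concentrated in internal degrees at most $i(c+1)+c-2$. Since $t_j^S(K) = j$, Lemma \ref{lem:tiineq}(c) then gives
$$
t_{a+2}^S(H_i(\mm^c)) \leq (a+2) + i(c+1) + c - 2 = a + i(c+1) + c
$$
for every $a \geq 0$. Next I apply Lemma \ref{lem:tiineq}(a) to the short exact sequences
$$
0 \to B_i(\mm^c) \to Z_i(\mm^c) \to H_i(\mm^c) \to 0, \qquad 0 \to Z_{i+1}(\mm^c) \to K_{i+1}(\mm^c) \to B_i(\mm^c) \to 0,
$$
to get $t_a^S(Z_{i+1}(\mm^c)) \leq \max\{a+(i+1)c,\ t_{a+1}^S(Z_i(\mm^c)),\ t_{a+2}^S(H_i(\mm^c))\}$. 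Plugging in the refined bound above absorbs the first term into the third, and taking the supremum over $a \geq 0$ yields the recursion
$$
\reg Z_{i+1}(\mm^c) \leq \max\{\reg Z_i(\mm^c) + 1,\ (i+1)(c+1) - 1\} \qquad (i \geq c).
$$

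The induction is now essentially automatic. The base case $i = c$ uses Proposition \ref{boundZ}(b) to supply $\reg Z_c(\mm^c) \leq c(c+1)$; the recursion then upgrades this to $\reg Z_{c+1}(\mm^c) \leq \max\{c(c+1)+1,\ (c+1)^2-1\} = (c+1)^2-1$. For the inductive step, if $\reg Z_s(\mm^c) \leq s(c+1)-1$ with $s \geq c+1$, the recursion gives $\reg Z_{s+1}(\mm^c) \leq \max\{s(c+1),\ (s+1)(c+1)-1\} = (s+1)(c+1)-1$. For the final assertion, Remark \ref{genZ1} tells me that $Z_1(\mm^c)$ is generated in internal degree $c+1$, so $Z_1(\mm^c)^{c+1} \subset Z_{c+1}(\mm^c)$ lives in degrees $\geq (c+1)^2$; since $\reg Z_{c+1}(\mm^c) \leq (c+1)^2-1$, every element of $Z_{c+1}(\mm^c)$ of degree $\geq (c+1)^2$ lies in $\mm Z_{c+1}(\mm^c)$, from which the containment follows.

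The crux of the argument is the refined estimate for $H_i(\mm^c)$: Corollary \ref{greengoes2} saves exactly one degree over the generic bound used inside the proof of Lemma \ref{veryeasy}, and the delicate point is to check that this one-unit saving propagates cleanly through the recursion to yield a uniform improvement by one for every $\reg Z_s(\mm^c)$ with $s \geq c+1$. This is where the characteristic hypothesis is used, and once the arithmetic of the recursion is pinned down it is the only nontrivial ingredient.
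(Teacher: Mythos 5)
Your proof is correct and follows essentially the same route as the paper: both use the two short exact sequences $0\to B_t\to Z_t\to H_t\to 0$ and $0\to Z_{t+1}\to K_{t+1}\to B_t\to 0$ together with the regularity bound of Proposition \ref{boundZ}(b) and the refined vanishing of Corollary \ref{greengoes2}, and deduce the second assertion from the generation degrees exactly as you do. The only cosmetic difference is that you run an induction on $t$, which is not actually needed, since the crude bound $\reg Z_t(\mm^c)\leq t(c+1)$ already makes the term $\reg Z_t(\mm^c)+1$ harmless at every step.
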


\begin{proof} To prove the first assertion, let us denote by $Z_t$
the module $Z_t(\mm^c)$ and similarly for $B_t, H_t$ and $K_t$. The
short exact sequences
$$
0\to B_t \to Z_t \to H_t\to 0 \mbox{ and } 0\to Z_{t+1} \to K_{t+1}
\to B_t\to 0
$$
imply that $\reg(Z_{t+1})\leq \max\{ \reg(Z_t)+1, \reg(H_t)+2\}$.
Using Lemma \ref{regZ} and Corollary \ref{greengoes2} one obtains
$\reg(Z_{t+1})\leq (t+1)(c+1)-1$ for every $t\geq c$. The second
assertion follows immediately from the first.
\end{proof}

 \section{The Green-Lazarsfeld index of Veronese subrings of polynomial rings}
\label{sec:polynomial}

Again we consider a standard graded $K$-algebra $R$ of the form
$R=S/I$ where $K$ is a field, $S=K[X_1,\ldots,X_n]$ is a polynomial
ring over $K$ graded by $\deg(X_i)=1$ and $I\subset S$ is a graded
ideal.

 Given $c \in \N, c\geq 1 $ and $0\leq k<c$, we set
$$
V_R(c,k)=\bigoplus_{i \in \N} R_{k+ic}.
$$
Observe that $R^{(c)}=V_R(c,0)$ is the usual $c$-th {Veronese
subring} of $R$, and that the $V_R(c,k)$ are $R^{(c)}$-modules known
as the \emph{Veronese modules} of $R$. For a finitely generated
graded $R$-module $M$ we similarly define
$$
M^{(c)}=\bigoplus_{i \in \Z} M_{ic}.
$$
We consider $R^{(c)}$ as a standard graded $K$-algebra with
homogeneous component of degree $i$ equal to $R_{ic}$, and $M^{(c)}$
as a graded $R^{(c)}$-module with homogeneous components $M_{ic}$.
The grading of the $R^{(c)}$-module $V_R(c,k)$ is given by
$V_R(c,k)_i=R_{k+ic}$. In particular, the latter modules are all
generated in degree $0$ with respect to this grading.

Let $T=\Sym(R_c)$ be the symmetric algebra on vector space $R_c$,
that is,
$$
T=K[Y_u: u \in B_c]
$$
where $B_c$ is any $K$-basis of $R_c$. When $R=S$ the basis $B_c$
can be taken as the set of monomials of degree $c$. The canonical
map $T\to R^{(c)}$ is surjective, and the modules $V_R(c,k)$ are
also finitely generated graded $T$-modules (generated in degree
$0$).

With the notation of the preceding sections we have:

\begin{lem}\label{lem:Bettiinterpreter}
For $i\in \N$, $j\in \Z$ and $0\leq k<c$ we have
$$
\beta_{i,j}^T(V_R(c,k))=\dim_K H_{i}(\mm^c,R)_{jc+k}.
$$
\end{lem}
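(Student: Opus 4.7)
The plan is to exhibit an isomorphism of complexes between the Koszul complex computing $\Tor_i^T(V_R(c,k),K)$ and the appropriate graded strand of $K(\mm^c,R)$.

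First I would recall that, since $T=\Sym(R_c)=K[Y_u:u\in B_c]$ is a polynomial ring in the variables $Y_u$, the Koszul complex
$$
K^T_\bullet = T\otimes_K \textstyle\bigwedge^{\bullet} R_c
$$
on the linear forms $Y_u$ (with $\partial(Y_u)=Y_u$ in the usual Koszul way) is a minimal graded free resolution of $K$ over $T$. Therefore
$$
\Tor_i^T(V_R(c,k),K) \;=\; H_i\bigl(V_R(c,k)\otimes_T K^T_\bullet\bigr).
$$
Now $V_R(c,k)\otimes_T K^T_i \cong V_R(c,k)\otimes_K \bigwedge^i R_c$, with the Koszul differential
$$
v\otimes u_1\wedge\cdots\wedge u_i \;\longmapsto\; \sum_{\ell=1}^{i}(-1)^{\ell+1}(Y_{u_\ell}\cdot v)\otimes u_1\wedge\cdots\widehat{u_\ell}\cdots\wedge u_i.
$$

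Second, I would make the grading identification. In $T$ the generators $Y_u$ have degree $1$ and $V_R(c,k)_\ell=R_{k+\ell c}$; in $K(\mm^c,R)$ the generators of $K_i(\mm^c,R)$ sit in $R$-degree $ic$. Thus the strand of $V_R(c,k)\otimes_T K^T_\bullet$ in internal $T$-degree $j$ has, in homological degree $i$, a basis of elements $v\otimes u_1\wedge\cdots\wedge u_i$ with $v\in R_{k+(j-i)c}$, and the assignment
$$
v\otimes u_1\wedge\cdots\wedge u_i \;\longmapsto\; v[u_1,\dots,u_i]
$$
gives a $K$-linear bijection onto the piece of $K_i(\mm^c,R)$ in $R$-degree $jc+k$.

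Third, I would check compatibility of the two differentials. Since the map $T\to R^{(c)}\subset R$ sends $Y_u\mapsto u$ and $V_R(c,k)$ is a $T$-module via this map, $Y_{u_\ell}\cdot v$ is literally the product $u_\ell v$ in $R$, which is exactly what the Koszul differential $\partial$ of $K(\mm^c,R)$ produces. Hence, for each fixed $j$, the complex $\bigl(V_R(c,k)\otimes_T K^T_\bullet\bigr)_j$ is isomorphic as a complex to the $R$-degree-$(jc+k)$ strand of $K(\mm^c,R)$.

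Finally I would take homology of this strandwise isomorphism to conclude
$$
\beta_{i,j}^T(V_R(c,k))=\dim_K \Tor_i^T(V_R(c,k),K)_j=\dim_K H_i(\mm^c,R)_{jc+k}.
$$
I do not anticipate a genuine obstacle here; the only thing to be careful about is the bookkeeping between the $T$-grading (where $Y_u$ has degree $1$) and the $R$-grading (where $u\in R_c$ has degree $c$), which is precisely what shifts $j$ to $jc+k$.
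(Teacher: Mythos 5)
Your proposal is correct and follows essentially the same route as the paper: both resolve $K$ over $T$ by the Koszul complex on the variables $Y_u$, tensor with $V_R(c,k)$, and identify the internal degree-$j$ strand of the result with the degree-$(jc+k)$ strand of $K(\mm^c,R)$. Your version merely spells out the grading bookkeeping and the compatibility of differentials that the paper leaves implicit.
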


\begin{proof}
Let $K(T_1)$ be the Koszul complex (of $T$-modules) associated to
the elements $Y_u$ with $u\in B_c$. We observe that
$$
\beta_{i,j}^T(V_R(c,k))=\Tor_i^T(K,V_R(c,k))_{j} =\dim_K
H_i(K(T_1))\otimes_T V_R(c,k))_{j}.
$$
But the last homology is $H_{i}(\mm^c)_{jc+k}$, the $i$-th
homology of the complex $K(\mm^c)_{jc+k}$.
\end{proof}

Lemma \ref{lem:Bettiinterpreter} and Corollary \ref{greengoes2}
imply:

\begin{cor}\label{cor:ncpolynomial}
For all integers $i\geq 0$ and $k=0,\dots,c-1$ we have
$$
t_i^T(V_S(c,k))<1+i+\frac{i-k}{c}.
$$
If $K$ has characteristic $0$ or $>c+1$ and $i\geq c$, then
$$
t_i^T(V_S(c,k))<1+i+\frac{i-k-1}{c} .
$$
\end{cor}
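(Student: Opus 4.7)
The plan is to combine Lemma \ref{lem:Bettiinterpreter} (which identifies $\beta_{i,j}^T(V_S(c,k))$ with $\dim_K H_i(\mm^c)_{jc+k}$) with the vanishing bounds on Koszul homology provided by Corollary \ref{greengoes2}. The problem reduces to a small arithmetic translation between the two degree conventions.

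First I would take $R=S$ in Lemma \ref{lem:Bettiinterpreter}, so that $t_i^T(V_S(c,k))$ equals the largest $j$ such that $H_i(\mm^c)_{jc+k}\neq 0$. To relate this to Corollary \ref{greengoes2}, I would rewrite the internal degree in the form used there, namely $jc+k = ic+m$ where $m=(j-i)c+k$. The first part of Corollary \ref{greengoes2} says $H_i(\mm^c)_{ic+m}=0$ once $m\geq i+c$; hence non-vanishing forces $(j-i)c+k<i+c$, i.e.\ $j<1+i+(i-k)/c$. This is exactly the first inequality.

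For the second inequality, I would apply the sharper part of Corollary \ref{greengoes2}: under the hypothesis $\chara K=0$ or $\chara K>c+1$ and assuming $i\geq c$, the vanishing $H_i(\mm^c)_{ic+m}=0$ holds for all $m\geq i+c-1$. Non-vanishing at degree $jc+k$ then forces $(j-i)c+k<i+c-1$, which rearranges to $j<1+i+(i-k-1)/c$, as required.

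There is essentially no obstacle; the only point that needs care is the translation of the index, in particular checking that the condition $i\geq c$ in the hypothesis of Corollary \ref{greengoes2} is precisely the one imposed in the second assertion, and that $k$ lies in the range $0\leq k<c$ so that the rewriting $jc+k=ic+m$ uses a well-defined non-negative shift $m$ whenever $j\geq i$ (if $j<i$ the bounds are trivial since the claimed upper bounds exceed $j$). Once these checks are in place, both inequalities follow by one line of arithmetic each.
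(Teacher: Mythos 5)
Your proposal is correct and is exactly the argument the paper intends: the paper derives the corollary directly from Lemma \ref{lem:Bettiinterpreter} and Corollary \ref{greengoes2}, and your degree translation $jc+k=ic+m$ with $m=(j-i)c+k$ (plus the observation that the bounds are trivial for $j\leq i$) supplies the arithmetic the paper leaves implicit.
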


\begin{rem}
\label{Andersen} Let $S=K[X_1,\dots,X_n]$. Andersen \cite{And}
proved that the graded Betti numbers $\beta_{ij}^T(S^{(2)})$ do not
depend on the characteristic of $K$ if $i\le4$ or if $i=5$ and
$n\leq 6$. She also proved that, for $n\geq 7$, one has $\beta_{5,
7}^T(S^{(2)})\neq 0$ in characteristic $5$ while $\beta_{5,
7}^T(S^{(2)})=0$ in characteristic $0$. Thus, for $n\geq 7$ one has
$$
\ind(S^{(2)})=\begin{cases}
                          5, & \chara K= 0,\\
                          4, & \chara K= 5.
              \end{cases}
$$
Also note that already $\beta_{2,3}(V_S(2,1))$ depends on the
characteristic if $n\geq 7$, as follows from the data in Remark
\ref{Weneedc+1}.
\end{rem}

We now record a duality on $H(\mm^c)$. It can be seen as an
Avramov-Golod type duality (see \cite[Theorem 3.4.5]{BRHE98}) or as
an algebraic version of Serre duality.

\begin{prop}\label{prop:duality}
Let $N=\binom{n+c-1}{c}.$ Then
$$
\dim_K H_i(\mm^c)_j=\dim_K H_{N-n-i}(\mm^c)_{Nc-n-j},\qquad
i,j\in\Z,\ i,j\ge 0.
$$
\end{prop}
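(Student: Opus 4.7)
The plan is to interpret both sides as graded Betti numbers of Veronese modules over the polynomial ring $T = \Sym(S_c)$, and then apply the standard Avramov-Golod duality for Cohen-Macaulay modules over a Gorenstein ring. First, Lemma~\ref{lem:Bettiinterpreter} translates the statement into Betti numbers: for $0 \le k < c$ and $j \in \Z$ one has $\dim_K H_i(\mm^c)_{jc+k} = \beta^T_{i,j}(V_S(c,k))$, so the proposition is equivalent to a symmetry among the graded Betti numbers of $V_S(c,0),\dots,V_S(c,c-1)$.

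Next I would verify that each $V_S(c,k)$ is a Cohen-Macaulay graded $T$-module of dimension $n$. Indeed $S^{(c)}$ is Noetherian of dimension $n$ and $S$ is a finite, faithful $S^{(c)}$-module with $\sqrt{\mm_{S^{(c)}} \cdot S}=\mm$, so $S$ is Cohen-Macaulay as an $S^{(c)}$-module; the grading decomposition $S = \bigoplus_{k=0}^{c-1} V_S(c,k)$ and the identity $\depth \bigoplus_k M_k = \min_k \depth M_k$ then force each summand to be CM of depth and dimension $n$. Via $T \twoheadrightarrow S^{(c)}$, $V_S(c,k)$ is a CM graded $T$-module of dimension $n$, so $\pd_T V_S(c,k) = N-n$. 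Dualizing a minimal graded free resolution with respect to $\omega_T = T(-N)$ then yields the standard Betti-number symmetry
$$
\beta^T_{i,j}\bigl(\omega_{V_S(c,k)}\bigr) = \beta^T_{N-n-i,\,N-j}\bigl(V_S(c,k)\bigr).
$$

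The main obstacle will be identifying the canonical modules. Setting $k^* = (-n-k)\bmod c \in \{0,\dots,c-1\}$ and $a_k = \lceil(n+k)/c\rceil$ (so that the key identity $a_k c = n+k+k^*$ holds), I expect to establish
$$
\omega_{V_S(c,k)} \cong V_S(c,k^*)(-a_k),
$$
either by a Hilbert-series computation using $H_{\omega_M}(s) = (-1)^n H_M(s^{-1})$ together with the explicit formula $H_{V_S(c,k)}(s) = \sum_{i\ge 0}\binom{ic+k+n-1}{n-1}s^i$, or equivalently via Serre duality on $\PP^{n-1}$ applied to $\mathcal{O}(k)$ pushed forward under the $c$-uple Veronese embedding $v_c$. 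The assignment $k \mapsto k^*$ is a bijection on $\{0,\dots,c-1\}$.

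Combining the two displayed identities gives $\beta^T_{i,j-a_k}(V_S(c,k^*)) = \beta^T_{N-n-i,N-j}(V_S(c,k))$. Translating both sides back through Lemma~\ref{lem:Bettiinterpreter} and invoking the identity $a_k c = n+k+k^*$, the substitution $J = (j-a_k)c + k^*$ yields $\dim H_i(\mm^c)_J = \dim H_{N-n-i}(\mm^c)_{Nc-n-J}$; as $(j,k)$ ranges, $J$ exhausts $\Z$. All steps besides the identification of $\omega_{V_S(c,k)}$ are routine.
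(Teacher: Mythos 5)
Your plan is sound and the bookkeeping checks out (the identity $a_kc=n+k+k^*$ does convert $\beta^T_{i,j-a_k}(V_S(c,k^*))=\beta^T_{N-n-i,N-j}(V_S(c,k))$ into exactly the asserted symmetry of $\dim_K H_i(\mm^c)_J$), but your route is genuinely different from, and heavier than, the one in the paper. You keep the standard grading on $T=\Sym(S_c)$, which forces you to split $S=\bigoplus_{k=0}^{c-1}V_S(c,k)$ into the $c$ Veronese modules, prove each is maximal Cohen--Macaulay over $T$ (so $\pd_T=N-n$), dualize each resolution into $\omega_T=T(-N)$, and --- this is your ``main obstacle'' --- identify $\omega_{V_S(c,k)}\cong V_S(c,k^*)(-a_k)$, essentially the Goto--Watanabe description of canonical modules of Veronese modules. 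The paper sidesteps all of this with one regrading trick: give $T$ the grading in which every $Y_u$ has degree $c$; then $S$ itself (with its standard grading) is a single graded $T$-module, $\dim_K H_i(\mm^c)_j=\beta^T_{i,j}(S)$ on the nose, $\omega_T=T(-Nc)$, and the only input is $\Ext^{N-n}_T(S,T(-Nc))\cong S(-n)$ together with $\Ext^j=0$ for $j<N-n$, which immediately gives $\beta^T_{i,j}(S)=\beta^T_{N-n-i,Nc-n-j}(S)$ with no case-by-case canonical-module computation and no degree translation between the $c$ congruence classes. What your approach buys is finer information (the individual symmetries among the Betti tables of the $V_S(c,k)$, pairing $k$ with $k^*$), at the cost of the extra identification; the paper's approach buys brevity and uniformity. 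One caveat on your unfinished step: a Hilbert-series computation alone only shows that $\omega_{V_S(c,k)}$ and $V_S(c,k^*)(-a_k)$ have equal Hilbert series, which does not yield an isomorphism of graded modules (and you need the isomorphism, since you compare all graded Betti numbers); so you should carry out the identification via your second route --- Serre/local duality, or directly $\omega_{V_S(c,k)}\cong\Hom_{S^{(c)}}(V_S(c,k),\omega_{S^{(c)}})$ with $\omega_{S^{(c)}}=\bigoplus_i S_{ic-n}$ --- after which the rest of your argument goes through.
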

\begin{proof}
For this proof (and only here) we consider the grading on the
polynomial ring $T=K[Y_u: u\in S \text{ monomial, } \deg u=c]$ in
which $Y_u$ has degree $c$. The polynomial ring $S$ in its standard
grading is a finitely generated graded $T$-module as usual.

Note that the canonical module of $S$ is $\omega_S=S(-n)$, and that
the canonical module of $T$ is $\omega_T=T(-Nc)$. Recall that
$$
\Ext_T^{j}(S,T(-Nc)) =
\begin{cases}
0&\text{if } j<N-n,\\
S(-n)&\text{if } j=N-n.
\end{cases}
$$
(See, e.g., \cite[Theorem 3.3.7 and Theorem 3.3.10]{BRHE98}.) Let $F$
be a minimal graded free $T$-resolution of $S$. Computing
$\Ext_T^{i}(S,T(-Nc))$ via $\Hom_T(F,T(-Nc))$, the minimal graded
free $T$-resolution of $S(-n)$, we see immediately that
$\beta_{i,j}^T(S)=\beta_{N-n-i,Nc-j}^T(S(-n))$. Then
\begin{equation*}
\dim_K H_i(\mm^c)_j
= \beta_{i,j}^T(S)\\
=\beta_{N-n-i,Nc-n-j}^T(S) =\dim_K
H_{N-n-i}(\mm^c)_{Nc-n-j}.\qedhere
\end{equation*}
\end{proof}

\begin{ex}
\label{ex:cn3} Let $\chara K=0$. Computer algebra systems as CoCoA
\cite{CoCoA}, Macaulay 2 \cite{MC2} or Singular \cite{Sing} can
easily compute the following diagram for $\dim_K H(\mm^3)$ in the
case $n=3$:
$$
\begin{tabular}{c|ccccccccc}
 & 0 & 1 & 2 & 3 & 4 & 5 & 6 & 7 \\
\hline
 0 & 1 & - & - & - & - & - & - & - & $\leftarrow$ \\
 1 & 3 & 15 & 21 & - & - & - & - & - \\
 2 & 6 & 49 & 105 & 147 & 105 & 21 & - & - \\
 3 &\bo& 27 & 105 & 189 & 189 & 105 & 27 & - & $\leftarrow$ \\
 4 & - &\bo & 21 & 105 & 147 & 105 & 49 & 6 \\
 5 & - & - &\bo & 0 & - & 21 & 15 & 3 \\
 6 & - & - & -  & \bo & 0 & - & - & 1 &$\leftarrow$ \\
\end{tabular}
$$
 The $(i,j)$-entry of the table is
$\dim_K H_i(\mm^c)_{ic+j}$ and - indicates that entry is $0$.
By selecting the rows whose indices are multiples of $c=3$ (those
marked by the arrows in the diagram) one gets the Betti diagram of
$S^{(3)}$. Green's theorem \cite[Thm.2.2]{GR2} implies the vanishing
in the positions of the boldface zeros and below. Our result implies
the vanishing in the positions of the non-bold zeros and below.
(Also see Weyman \cite[Example 7.2.7]{WEY} for the case $n=c=3$.)
\end{ex}

Using the duality we prove the upper bound for $\ind(S^{(c )})$ due
to Ottaviani and Paoletti \cite{OTPA} (in arbitrary characteristic).
To this end we need a variation of \cite[Corollary 2.10]{RO02}.
\begin{prop}\label{prop:cyclecounter}
Let $(e_i:i=1,\dots, m)$ be a basis of the vector space $\bigwedge^t
S_c$ (thus $m=\binom{N}{t}$ with $N=\binom{n-1+c}{n-1}$). Let
$$
z=\sum_{i=1}^m f_i e_i
$$
be a non-zero element in $Z_t(\mm^c)$. Then the $K$-vector space
generated by the coefficients $f_i$ of $z$ has dimension $\geq t+1$.
\end{prop}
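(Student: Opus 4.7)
I would argue by induction on $t$, adapting the strategy of \cite[Corollary 2.10]{RO02}.

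The base case $t=1$ is immediate. A cycle $z=\sum_i f_i[u_i]\in Z_1(\mm^c)$ encodes the relation $\sum_i f_iu_i=0$ in the integral domain $S$. If the $f_i$ spanned only a one-dimensional subspace $Kg$, writing $f_i=\lambda_i g$ would force $\sum_i\lambda_i u_i=0$, contradicting the $K$-linear independence of the monomial basis $\{u_i\}\subset S_c$. Hence $\dim_K\mathrm{span}\{f_i\}\geq 2$.

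For the inductive step, the central tool is the contraction $\iota_\alpha\colon K_t(\mm^c)\to K_{t-1}(\mm^c)$ associated to a functional $\alpha\in S_c^*$: it is the unique $S$-linear antiderivation on $K(\mm^c,S)\cong S\otimes_K\bigwedge^\bullet S_c$ extending $v\mapsto\alpha(v)$ on $S_c$. A direct check on the generators shows $\partial\iota_\alpha+\iota_\alpha\partial=0$, so $\iota_\alpha$ carries $Z_t(\mm^c)$ into $Z_{t-1}(\mm^c)$. Writing $z=\sum_I f_Ie_I$ and $V:=\mathrm{span}_K\{f_I\}$, the coefficients of $\iota_\alpha z$ in the standard basis of $K_{t-1}(\mm^c)$ are explicit $K$-linear combinations of the $f_I$'s, so the coefficient span $V(\iota_\alpha z)$ is contained in $V$. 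Moreover, if $\iota_\alpha z=0$ for every $\alpha\in S_c^*$, then varying $\alpha$ on the (linearly independent) basis monomials of $S_c$ forces $f_I=0$ for all $I$; hence for $z\neq 0$ there exists $\alpha_0$ with $\iota_{\alpha_0}z\neq 0$, and the inductive hypothesis gives $\dim_K V(\iota_{\alpha_0}z)\geq t$, whence $\dim_K V\geq t$. To sharpen this to $\dim_K V\geq t+1$, I would select $\alpha_0$ so that $V(\iota_{\alpha_0}z)$ is \emph{strictly} contained in $V$: decomposing $z=\sum_{k=1}^s g_k\otimes w_k$ in a basis $g_1,\dots,g_s$ of $V$ with $w_k\in\bigwedge^t S_c$, one picks $\alpha_0$ that annihilates the monomial support of some nonzero $w_{k_0}$, so that the $g_{k_0}$-component of $\iota_{\alpha_0}z$ vanishes, while keeping $\iota_{\alpha_0}z\neq 0$ by ensuring $\iota_{\alpha_0}w_k\neq 0$ for at least one $k\neq k_0$. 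The inclusion $V(\iota_{\alpha_0}z)\subseteq\mathrm{span}_K\{g_k:k\neq k_0\}\subsetneq V$, combined with the inductive lower bound, then yields $\dim_K V\geq t+1$.

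The main obstacle is the simultaneous realization of the two requirements on $\alpha_0$: it must vanish on the support of the chosen $w_{k_0}$ and yet not collapse all of the remaining $\iota_{\alpha_0}w_k$ to zero. In delicate configurations--for instance when the monomial supports of the $w_k$ are tightly interlaced, or when every monomial of $S_c$ appears in $w_{k_0}$ relative to the chosen basis--such an $\alpha_0$ may not exist directly, and one has to first replace $\{g_k\}$ by a more favourable basis of $V$, combine several contractions, or exploit a genericity argument on $\alpha_0$ to engineer the needed strict drop in dimension. Settling this linear-algebraic bookkeeping is where the technical heart of the proof lies.
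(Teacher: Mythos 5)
Your setup is sound and is essentially the same as the paper's first move: the decomposition $z=a+b[u]$ used there is exactly the contraction $b=\pm\iota_{u^*}(z)$ against a dual basis vector, contraction sends cycles to cycles, the coefficient span can only shrink, and induction then gives $\dim_K C(z)\ge t$. The gap is in the upgrade from $t$ to $t+1$, which is the entire content of the proposition, and it is not ``linear-algebraic bookkeeping.'' Writing $z=\sum_k g_k\otimes w_k$ with $(g_k)$ a basis of $V=C(z)$, your strict drop requires a functional $\alpha_0$ and an index $k_0$ with $\iota_{\alpha_0}w_{k_0}=0$ but $\iota_{\alpha_0}w_k\ne 0$ for some $k\ne k_0$. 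Now $\iota_{\alpha_0}w_{k_0}=0$ forces $w_{k_0}\in\bigwedge^t(\ker\alpha_0)$, i.e.\ $w_{k_0}$ must be ``degenerate,'' supported on a proper subspace of $S_c$. A change of basis of $V$ only lets you replace $w_{k_0}$ by an arbitrary nonzero element of $\operatorname{span}_K\{w_1,\dots,w_s\}$, and for a generic $s$-dimensional subspace of $\bigwedge^tS_c$ no nonzero element is degenerate; so no choice of basis, functional, or generic perturbation produces the strict inclusion by coefficient manipulation alone. Indeed the statement you are trying to force is false without the cycle condition ($z=g\otimes w$ with $w$ generic has one-dimensional coefficient span and every nonzero contraction has the same span), so $\partial z=0$ must be used a second time in the inductive step, and your proposal never does.

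The paper closes exactly this case with a term-order argument. One takes $\alpha_0=u^*$ for $u$ the \emph{largest} monomial of degree $c$ (in a fixed term order) with $b=\pm\iota_{u^*}z\ne 0$; then either $C(b)\subsetneq C(z)$ and one is done, or $C(b)=C(z)$, and this is shown to be impossible: from $\partial z=0$ one gets $\partial(a)=\mp bu$, every monomial occurring in a coefficient of $\partial(a)$ has the form $wu_1$ with $w\le v$ (where $v$ is the largest monomial occurring in $C(b)\supseteq C(a)$) and $u_1<u$ (maximality of $u$), hence is strictly smaller than $vu$, while $vu$ does occur in $uC(b)=C(\partial(a))$ --- a contradiction. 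If you want to salvage your contraction framework, this is the missing ingredient: a maximality choice of $\alpha_0$ plus a second use of the differential to rule out $C(\iota_{\alpha_0}z)=C(z)$, rather than an a priori engineered strict drop.
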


\begin{proof}
Since the $K$-vector space dimension of the space of coefficients does not depend
on the basis, it is enough to prove the assertion for the monomial basis
$(e_i)$. We use induction on $t$.

The case $t=0$ is obvious. So assume $t>0$. Fix a term order, for
example the lexicographic term order, on $S$. Let $C(z)$ denote the
vector space generated by the coefficients of $z$. As already
discussed in the proof of Theorem \ref{thm:generators}, for every
monomial $u$ of degree $c$ we may write $z=a+b[u]$ with $b\in
Z_{t-1}(\mm^c)$. Choose $u$ to be the largest monomial (with respect
to the term order) such that the corresponding $b$ is non-zero. By
induction $\dim_K C(b)\geq t$ and $C(b)\subset C(z)$.

If $C(b)\neq C(z)$ then clearly $\dim_K C(z)\geq t+1$. If instead
$C(b)=C(z)$, then $C(a)\subset C(b)$. Let $v$ be the largest
monomial appearing in the elements of $C(b)$. The inclusion
$C(a)\subset C(b)$ implies that every monomial appearing in the
elements of $C(a)$ is $\leq v$. But $\partial(a)\pm bu=0$ and hence
$C(\partial(a))=C(bu)=uC(b)$. The monomial $vu$ appears in $C(bu)$.
Every monomial in $C(\partial(a))$ is of the form $wu_1$ where $w$
is a monomial appearing in $C(a)$ and $u_1$ is a monomial of degree
$c$ which is an ``exterior'' factor of some free generator appearing
in $z$. By construction $w\leq v$ and $u_1<u$. It follows that
$wu_1\neq vu$, a contradiction with $C(\partial(a))=uC(b)$.
\end{proof}

\begin{thm}
\label{thm:ottpa} For $n\geq 3$ and $c\geq 3$ one has
$\ind(S^{(c)})\leq 3c-3$, and equality holds for $n=3$.
\end{thm}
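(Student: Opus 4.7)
The theorem splits into two claims: (i) the upper bound $\ind(S^{(c)}) \leq 3c-3$ for all $n \geq 3$, $c \geq 3$, and (ii) the matching lower bound when $n = 3$.

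For (i), I would aim to exhibit a specific non-linear syzygy, namely $\beta^T_{3c-2,\,3c}(S^{(c)}) \neq 0$, which witnesses the failure of $N_{3c-2}$. By Lemma \ref{lem:Bettiinterpreter}, this Betti number equals $\dim_K H_{3c-2}(\mm^c)_{3c^2}$, and Proposition \ref{prop:duality} transports this dimension to $\dim_K H_{N-n-3c+2}(\mm^c)_{(N-3c)c-n}$, which places us in the low-homological-degree regime where the cycles of Lemma \ref{lem:newcycles} are explicitly available. I would construct a cycle using that lemma, tuning the parameters $s$ and the monomials $a_i, b_i$ so that both the total $\Z^n$-degree and the homological degree match the dual bidegree. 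Certifying non-triviality modulo boundaries at this bidegree is the crux, and it proceeds by a direct multigraded analysis of $B_{i''}(\mm^c)$ in the relevant degree.

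For (ii), by Lemma \ref{lem:Bettiinterpreter} the lower bound $\ind(S^{(c)}) \geq 3c-3$ is equivalent to $H_i(\mm^c)_{(i+k)c}=0$ for all $1\leq i \leq 3c-3$ and $k \geq 2$. Apply Proposition \ref{prop:duality} with $n=3$: the bidegree $(i,(i+k)c)$ is sent to $(N-3-i,\,(N-3-i)c + (3-k)c - 3)$. For $k \geq 3$ the shifted column is strictly negative and the homology vanishes automatically, because $K_{i'}(\mm^c)$ is supported only in internal degrees $\geq i'c$. Only the case $k=2$ remains, with shifted column $c-3 \geq 0$; a second application of Lemma \ref{lem:Bettiinterpreter} identifies the desired vanishing with $\beta^T_{i',i'}(V_S(c,c-3)) = 0$ for $i'$ in an explicit range. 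When $c=3$ the module reduces to $S^{(3)}$ itself and the vanishing is immediate since a standard graded algebra generated in degree $0$ has no diagonal Betti numbers at positive homological degree. For $c \geq 4$, the vanishing is delivered by Proposition \ref{prop:cyclecounter}: a hypothetical nonzero cycle in the relevant bidegree would need a coefficient space of dimension $\geq i'+1$, which is incompatible with the structure of $K_{i'}(\mm^c)$ in that bidegree.

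The main obstacle is the non-vanishing step in (i): Lemma \ref{lem:newcycles} produces candidate cycles readily, but certifying that the chosen cycle is not a boundary requires a careful direct check rather than a pure cycle-counter argument, and the parameter choice must be made so that the bidegree matches the target $(N-n-3c+2,\,(N-3c)c-n)$ exactly. For (ii), the duality trick provides a clean split: the $k \geq 3$ vanishings come for free from support considerations, and the $k = 2$ vanishings succumb to Proposition \ref{prop:cyclecounter}. Because no representation-theoretic input is used anywhere, the whole argument is characteristic-free, which is the precise improvement being claimed over Ottaviani and Paoletti's original proof.
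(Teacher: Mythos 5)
Your treatment of the lower bound for $n=3$ is essentially the paper's own argument: duality sends the bidegree $(i,(i+k)c)$ to a position where the Koszul cycles have coefficients of degree $(3-k)c-3$, the cases $k\geq 3$ die for degree reasons, and the case $k=2$ (coefficient degree $c-3$) is killed by Proposition \ref{prop:cyclecounter} together with $\dim_K S_{c-3}=\binom{c-1}{2}$; the count $N-3-\binom{c-1}{2}=3c-3$ then gives exactly the range $i\leq 3c-3$. (Your special-casing of $c=3$ is unnecessary: the cycle counter with $\dim_K S_0=1$ already handles it.) This half is correct.

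The upper bound is where your proposal has genuine gaps. First, the step you defer as ``the crux'' --- certifying that the candidate cycle is not a boundary --- is precisely where the decisive observation lives, and it is not a ``direct multigraded analysis of the boundaries'': every boundary in $K(\mm^c)$ has coefficients of degree $\geq c$, while in the dual bidegree (for $n=3$) the cycles have coefficients of degree $c-3<c$, so there are \emph{no} boundaries at all in that bidegree and any nonzero cycle already gives nonzero homology. Without this remark your plan does not close. Second, your route to the cycle via Lemma \ref{lem:newcycles} is dangerous in positive characteristic: to land in coefficient degree $c-3$ one is forced to take $s=c-3$ and (essentially) all $b_i=X_1X_2X_3$, and then the sum over $\mathcal{S}_{t+1}$ collapses to $t!$ times a single element, which vanishes when $\chara K\leq t$. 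The paper instead takes the boundary $\partial([u_1,\dots,u_{j+1}])$ with $u_k=u_k'X_1X_2X_3$ and divides its coefficients by $X_1X_2X_3$, producing a manifestly nonzero cycle with coefficients of degree $c-3$ in every characteristic. Third, for $n\geq 4$ you cannot run the duality directly: the dual coefficient degree becomes $(n-2)c-n\geq c$, where boundaries do exist, so the non-vanishing must be obtained by restricting to the multidegrees supported on three variables, using $H_i(\mm^c)_a=H_i(\mm_3^c)_b$, and then transporting the $n=3$ non-vanishing; this reduction is absent from your proposal.
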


\begin{proof}
We first consider the case $n=3$. By an inspection of the Hilbert
function of (the Cohen-Macaulay ring) $S^{(c)}$ one sees immediately
that $\reg S^{(c)}\leq 2$, that is, $t_i^T(S^{(c)}) \leq i+2$ for
every $i\geq 0$. From Theorem \ref{lem:Bettiinterpreter} and
Proposition \ref{prop:duality} we have
$$
\beta_{i,j}^T(S^{(c)}) = \dim_K H_i(\mm^c)_{jc} = \dim_K
H_{N-3-i}(\mm^c)_{Nc-3-jc}.
$$
Therefore $t_i^T(S^{(c)}) \leq i+1$ if and only if
$$
H_{N-3-i}(\mm^c)_{(N-3-i)c+c-3}=0,
$$
and, since the boundaries have coefficients of degree $\geq c$, this
is equivalent to $$ Z_{N-3-i}(\mm^c)_{(N-3-i)c+c-3}=0.$$ So we have
to analyze the cycles in $Z_{N-3-i}(\mm^c)$ with coefficients of
degree $c-3$.

It follows from Proposition \ref{prop:cyclecounter} that
$$
N-3-i+1\leq \dim_K S_{c-3}= \binom{c-1}{2}
$$
if there exists a non-zero cycle $z\in Z_{N-3-i}(\mm^c)$ with
coefficients of degree $c-3$. Thus there are no cycles in that
degree if $N-3-i\geq \binom{c-1}{2}$. Hence
$$
t_i^T(S^{(c)}) \leq i+1 \text{ for } 0\leq i\leq 3c-3,
$$
that is, $\ind(S^{(c)})\geq 3c-3$. It remains to show that $S^{(c)}$
does not satisfy the property $N_{3c-2}$. We have to find a non-zero
cycle in $Z_{j}(\mm^c)$ with coefficients of degree $c-3$ where
$j=N-3-i$. Note that $j=\binom{c-1}{2}-1$ and so $j+1=\dim S_{c-3}$.
Take the monomials $u'_1,\dots,u'_{j+1}$ of degree $c-3$ and set
$u_k=u'_kX_1X_2X_3$ for $k=1,\dots,j+1$. Then
$$
w=\partial([u_1,\dots,u_{j+1}]) \in Z_j(\mm^c)
$$
is non-zero boundary with coefficients of degree $c$. But we can
divide each coefficient of $w$ by $X_1X_2X_3$ to obtain a non-zero
cycle $z\in Z_j(\mm^c)$ with coefficients of degree $c-3$. It
follows that $S^{(c)}$ does not satisfy the property $N_{3c-2}$.
This concludes the proof for $n=3$.

Now let $n>3$. Recall that $H_i(\mm^c)$ is multigraded. For a vector
$a=(a_1,\dots,a_n)\in \N^n$ with $a_i=0$ for $i>3$ let
$b=(a_1,a_2,a_3)$. We may identify
$$
 H_i(\mm^c)_a= H_i(\mm_3^c)_b
$$
where $H_i(\mm_3^c)$ is the corresponding Koszul homology in $3$
variables. Since for $n=3$ the $c$-th Veronese does not satisfy
$N_{3c-2}$ it follows that the same is true for all $n\geq 3$,
proving that $\ind(S^{(c)})\leq 3c-3$. \end{proof}

\begin{rem}
It is well-known that $\reg S^{(c)}\leq n-1$ in general, i.e.,
$t_i(S^{(c)})\leq i+n-1$. Analogously to the proof of Theorem
\ref{thm:ottpa} one can determine the largest $i$ such that
$t_i(S^{(c)})<i+n-1$. Again this is determined by elements in
$Z_i(\mm^c)$ with coefficients of degree $c-n$. It remains to count
the monomials of $S$ in that degree. For example, for $c\geq n=4$
one obtains $t_i(S^{(c)})<i+3$ if and only if $i\leq 2c^2-2$.
\end{rem}

\section{The Green-Lazarsfeld index of Veronese subrings of standard graded rings}
\label{sec:standard}

Let $D$ be a Koszul $K$-algebra and $I$ be a homogeneous ideal of
$D$. Set $R=D/I$. We want to relate the Green-Lazarsfeld index of
$R^{(c)}$ to that of $D^{(c)}$. For a polynomial ring $S$ Aramova,
B\v{a}rc\v{a}nescu and Herzog proved in \cite[Theorem 2.1]{ABH} that the Veronese modules
$V_S(c,k)$ have a linear resolution over the Veronese ring $S^{(c)}$. We show that this property holds for Koszul algebras in
general.

\begin{lem}\label{lem:t_iveronese}
Assume $D$ is a Koszul algebra and, for a given $c$, let
$T=\Sym(D_c)$ be the symmetric algebra of $D_c$.
\begin{itemize}
\item [(a)]
The Veronese module $V_D(c,k)$ has a linear resolution as a
$D^{(c)}$-module.
\item [(b)]
For every $k=0,\dots,c-1$ we have
$$
t^T_i(V_D(c,k)) \leq t^T_i(D^{(c)}).
$$
\end{itemize}
\end{lem}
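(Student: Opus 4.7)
The plan is to prove (a) first by exploiting the decomposition of the Koszul complex into residue classes modulo $c$, and then to derive (b) from (a) via a change-of-rings spectral sequence.

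For part (a), the plan is to apply Proposition \ref{boundZ}(b) with $R = D$ (Koszul, so $\Br(D) = 1$) and $M = D$ (so $\reg_D D = 0$), which yields
$$
\reg_D Z_i(\mathfrak{m}^c, D) \leq i(c+1), \qquad t_a^D(Z_i(\mathfrak{m}^c, D)) \leq a + i(c+1).
$$
The crucial observation is that the Koszul complex $K(\mathfrak{m}^c, D) = D \otimes_K \bigwedge^\bullet D_c$ respects the splitting $D = \bigoplus_{k=0}^{c-1} V_D(c,k)$ of $D$ as a $D^{(c)}$-module: the Koszul differentials involve multiplication by elements of $D_c = D^{(c)}_1$, which preserves the residue class modulo $c$. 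Thus the whole complex, along with its cycles, boundaries, and homology, splits as a direct sum $\bigoplus_{k=0}^{c-1}$ of $D^{(c)}$-subcomplexes concentrated in degrees $\equiv k \pmod c$. Each summand $V_D(c,k) \otimes_K \bigwedge^\bullet D_c$ is naturally identified with $V_D(c,k) \otimes_T K(T_1)$, whose homology computes $\Tor_i^T(V_D(c,k), K)$ by Lemma \ref{lem:Bettiinterpreter}. Restricting the $D$-regularity bound on $Z_i$ to the $k$-th summand and converting between $D$-grading and $D^{(c)}$-grading (where $c$ units of $D$-degree equal one unit of $D^{(c)}$-degree) should give $t_i^{D^{(c)}}(V_D(c,k)) \leq i$, and hence $\reg_{D^{(c)}} V_D(c,k) = 0$. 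The short exact sequences
$$
0 \to B_i \to Z_i \to H_i \to 0, \qquad 0 \to Z_{i+1} \to K_{i+1} \to B_i \to 0
$$
of Lemma \ref{veryeasy} (restricted to the $k$-th summand) propagate the bounds in the standard way.

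For part (b), given (a) one applies the change-of-rings spectral sequence for the surjection $T \twoheadrightarrow D^{(c)}$,
$$
E_2^{p,q} = \Tor_p^{D^{(c)}}\bigl(V_D(c,k), \Tor_q^T(D^{(c)}, K)\bigr) \Rightarrow \Tor_{p+q}^T(V_D(c,k), K).
$$
By (a), $\Tor_p^{D^{(c)}}(V_D(c,k), K)$ is concentrated in degree $p$, so each $E_2^{p,q}$-piece is a graded shift of $\Tor_q^T(D^{(c)}, K)$ by $p$. Tracking the maximal degree in the abutment and comparing with the analogous computation for $V_D(c,0) = D^{(c)}$ should yield the inequality $t_i^T(V_D(c,k)) \leq t_i^T(D^{(c)})$.

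The main obstacle is part (a): translating the $D$-module regularity bound on the Koszul cycles $Z_i(\mathfrak{m}^c, D)$ into the $D^{(c)}$-module statement $\reg_{D^{(c)}} V_D(c,k) = 0$. The summands $V_D(c,k) \otimes \bigwedge^\bullet D_c$ are $D^{(c)}$-modules but not $D$-modules, so the careful bookkeeping of degrees under the change of grading (and the verification that the resulting candidate resolution is actually linear over $D^{(c)}$) is the delicate step. This is precisely where the Koszul hypothesis on $D$ enters essentially, since without it the bound of Proposition \ref{boundZ}(b) degrades by the additive term $\reg_D D$, which is already enough to break linearity over the Veronese.
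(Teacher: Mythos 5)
Your reduction of (b) to (a) is fine and is essentially the paper's own step (the paper applies Lemma \ref{lem:tiineq}(b) to the linear $D^{(c)}$-free resolution of $V_D(c,k)$, which encodes the same degree bookkeeping as your change-of-rings spectral sequence). The genuine gap is in part (a). The decomposition of $K(\mm^c,D)=D\otimes_K\bigwedge^{\pnt}D_c$ into congruence classes modulo $c$ identifies the $k$-th summand with $V_D(c,k)\otimes_T K(T_1)$, and, as you yourself note via Lemma \ref{lem:Bettiinterpreter}, its homology computes $\Tor_i^{T}(V_D(c,k),K)$ over the \emph{polynomial ring} $T=\Sym(D_c)$ --- not $\Tor_i^{D^{(c)}}(V_D(c,k),K)$. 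The Koszul complex on the degree-one part of $A=D^{(c)}$ is not a resolution of $K$ over $A$ unless $A$ is itself a polynomial ring, so no regrading of $K(\mm^c,D)$ can produce the numbers $t_i^{A}(V_D(c,k))$. Hence the step ``restricting the $D$-regularity bound on $Z_i$ \dots should give $t_i^{D^{(c)}}(V_D(c,k))\le i$'' is a non sequitur. Worse, the conclusion you would actually reach by this route is false in the relevant sense: what $\reg_D Z_i(\mm^c,D)\le i(c+1)$ yields is precisely the bound of Corollary \ref{cor:ncpolynomial} (transported from $S$ to a Koszul $D$), namely $t_i^{T}(V_D(c,k))<i+1+(i-k)/c$, which permits $t_i^T$ to exceed $i$; indeed $V_D(c,0)=D^{(c)}$ is defined over $T$ by quadrics, so its $T$-resolution is never linear. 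Linearity holds only over $D^{(c)}$, and that is a statement the cycle module $Z_i(\mm^c,D)$ cannot see.

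The paper proves (a) by an entirely different mechanism (going back to Aramova--B\u{a}rc\u{a}nescu--Herzog): since $D$ is Koszul, the power $\mm^k$ has a linear free resolution over $D$; shifting it by $k$ and applying the exact functor $(\ )^{(c)}$ gives an exact complex of $A$-modules whose $j$-th term is a direct sum of copies of $V_D(c,e_j)(-\lceil j/c\rceil)$ with $e_j=c\lceil j/c\rceil-j$, and Lemma \ref{lem:tiineq}(b) together with induction on homological degree yields $t_i^{A}(V_D(c,k))\le\max\{i-j+\lceil j/c\rceil\}=i$. So the Koszul hypothesis enters through the linearity of the resolutions of the powers of $\mm$ over $D$, not through $\Br(D)=1$ in Proposition \ref{boundZ}. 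To repair your argument you would have to replace $K(\mm^c,D)$ by some resolution-theoretic device over $D^{(c)}$ itself; the route through $Z_i(\mm^c,D)$ inherently lives over $T$.
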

\begin{proof}
(a) Let $\mm$ denote the homogeneous maximal ideal of $D$, and set
$A=D^{(c)}$ and $V_k=V_D(c,k)$. We prove by induction on $i$ that
$t_i^A(V_k)\leq i$ for all $i$ and $k$. For $i=0$ the assertion is
obvious and it is so for $k=0$ and $i\geq 0$, too. Assume that
$i>0$. The ideal $\mm^k$ is generated in degree $k$ and, since $D$
is Koszul, it has a linear resolution over $D$. Shifting that
resolution by $k$, we obtain a complex
$$
\cdots\to F_i\to F_{i-1}\to\dots \to F_1\to F_0\to 0
$$
resolving $\mm^k(k)$ and such that $F_i=D(-i)^{\beta_i}$. Applying
the exact functor $(\ )^{(c)}$ to it we get an exact complex of
$A$-modules
$$
\cdots\to F_i^{(c)} \to F_{i-1}^{(c)}\to\cdots \to F_1^{(c)}\to A^{\beta_0}\to V_k\to 0.
$$
Note that $D(-j)^{(c)}=V_e(-\lceil j/c\rceil)$ where $e=c \lceil
j/c\rceil -j$. Therefore $F_j^{(c)}=V_{e_j}(-\lceil
j/c\rceil)^{\beta_j}$ where $e_j=c \lceil j/c\rceil -j$. Applying Lemma
\ref{lem:tiineq} (b) to the complex above we have
$$
t_i^A(V_k)\leq \max\{ t_{i-j}^A(V_{e_j})+\lceil j/c\rceil : j=0,\dots,i\}.
$$
Obviously $ t_{i}^A(V_{e_0})=t_i^A(A)=-\infty$ and, by induction,
$t_{i-j}^A(V_{e_j})\leq i-j$ for $j=1,\dots, i$. Therefore
$$
t_i^A(V_k)\leq \max\{ i-j+\lceil j/c\rceil : j=1,\dots, i\}= i
$$
and this concludes the proof of (a). For (b) we may apply
Lemma \ref{lem:tiineq}(b) to the minimal $A$-free resolution of $V_k$
and to get the desired inequality.
\end{proof}

Now we prove the main result of this section.
\begin{thm}
\label{thm:rateresult} Assume $D$ is a Koszul algebra and $R=D/I$.
 Let $c\geq \slope _D(R)$. Then $\ind(R^{(c)})\geq \ind(D^{(c)})$.
\end{thm}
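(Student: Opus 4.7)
The plan is to take a minimal $D$-free resolution of $R$, apply the (exact) $c$-th Veronese functor to obtain an exact complex over $D^{(c)}$ augmented to $R^{(c)}$, and then estimate the resulting $T$-Betti numbers via Lemma \ref{lem:t_iveronese}(b), where $T=\Sym(D_c)$. Since $T$ surjects onto both $D^{(c)}$ and $R^{(c)}$, the remark at the start of Section \ref{sec:generalbounds} justifies computing both Green--Lazarsfeld indices using this common $T$.

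Concretely, let $G_\bullet\to R\to 0$ be a minimal graded $D$-free resolution, so $G_j=\bigoplus_\ell D(-\ell)^{\beta_{j,\ell}^D(R)}$. Applying the exact functor $(\ )^{(c)}$ yields an exact complex $G_\bullet^{(c)}\to R^{(c)}\to 0$ of $D^{(c)}$-modules, which I view as a complex of $T$-modules. Using the identification $D(-\ell)^{(c)}\cong V_D(c,e_\ell)(-\lceil \ell/c\rceil)$ with $e_\ell=c\lceil\ell/c\rceil-\ell$ (already invoked in the proof of Lemma \ref{lem:t_iveronese}), each $G_j^{(c)}$ is a direct sum of shifted Veronese modules $V_D(c,k)$. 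Lemma \ref{lem:t_iveronese}(b) gives $t_{i-j}^T(V_D(c,e_\ell))\le t_{i-j}^T(D^{(c)})$, and the slope hypothesis $c\ge\slope_D(R)$ forces $\ell\le t_j^D(R)\le jc$ for $\beta_{j,\ell}^D(R)\neq 0$ (using $t_0^D(R)=0$, since $R$ is cyclic as a $D$-module), hence $\lceil \ell/c\rceil\le j$. Combining, one obtains $t_{i-j}^T(G_j^{(c)})\le t_{i-j}^T(D^{(c)})+j$.

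Feeding this into Lemma \ref{lem:tiineq}(b) applied to the exact complex $G_\bullet^{(c)}\to R^{(c)}\to 0$ yields
$$t_i^T(R^{(c)})\le\max_{0\le j\le i}\bigl\{t_{i-j}^T(D^{(c)})+j\bigr\}.$$
Now set $p=\ind(D^{(c)})$ and take $i\le p$. For $j=i$ the summand is $t_0^T(D^{(c)})+i=i$; for $0\le j<i$ we have $1\le i-j\le p$, so $t_{i-j}^T(D^{(c)})\le (i-j)+1$ and the summand is $\le i+1$. Thus $t_i^T(R^{(c)})\le i+1$ for every $i\le p$, which is exactly $\ind(R^{(c)})\ge \ind(D^{(c)})$.

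The main obstacle is the bookkeeping that matches the shift $\lceil \ell/c\rceil$ arising from the Veronese of $D(-\ell)$ with the homological degree $j$ in the resolution: the slope assumption $c\ge\slope_D(R)$ is precisely what makes these two quantities align so that no extra slack appears in the right-hand side of the estimate. Once that alignment is set up and Lemma \ref{lem:t_iveronese}(b) is used to strip away the internal structure of the Veronese modules, the inequality $\ind(R^{(c)})\ge\ind(D^{(c)})$ falls out by a direct numerical comparison.
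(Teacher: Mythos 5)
Your proof is correct and follows essentially the same route as the paper: both apply the exact Veronese functor to the minimal $D$-free resolution of $R$, identify $D(-\ell)^{(c)}$ with shifted Veronese modules, invoke Lemma \ref{lem:t_iveronese} together with Lemma \ref{lem:tiineq}(b), and use $c\ge\slope_D(R)$ to make the shifts $\lceil\ell/c\rceil$ no larger than the homological degree. The only difference is organizational: the paper factors the argument through $\reg_{D^{(c)}}(R^{(c)})=0$ and Lemma \ref{lem:tiineq}(d) using Lemma \ref{lem:t_iveronese}(a), whereas you work directly over $T$ with Lemma \ref{lem:t_iveronese}(b) and obtain the inequality $t_i^T(R^{(c)})\le\max_j\{t_{i-j}^T(D^{(c)})+\lceil t_j^D(R)/c\rceil\}$, which is exactly the estimate the paper itself records in the remark following the corollary.
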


\begin{proof} To prove the statement we set $A=D^{(c)}$ and $B=R^{(c)}$.
By virtue of Lemma \ref{lem:tiineq} (d) it is enough to show that
$\reg_{A}(B)=0$. Let
$$
\dots \to F_p\to \dots \to F_1 \to F_0\to R\to 0
$$
be the minimal graded free resolution of $R$ over $D$.
 Since $(\ )^{(c)}$ is an exact functor, we obtain
an exact complex of finitely generated graded $A$-modules
\begin{equation}
\label{res-c} \dots \to F_p^{(c)} \to \dots \to F_1^{(c)} \to
F_0^{(c)} \to B \to 0.
\end{equation}
Hence by virtue of Lemma \ref{lem:tiineq} (b) we have
$$
\reg_{A}(B ) \leq \max \{ \reg_{A}(F_i^{(c)})-i : i \geq 0\}.
$$
Note that $D(-k)^{(c)}=V_D(c,e)(-\lceil k/c\rceil)$ where $e=c
\lceil k/c\rceil -k$. Hence, by virtue of Lemma
\ref{lem:t_iveronese}, $\reg_{A}(D(-k)^{(c)})=\lceil k/c\rceil $.
Therefore, since $F_i=\bigoplus_{k \in \Z} S(-k)^{\beta_{ik}^D(R)}$
we get $\reg_{A}(F_i^{(c)})=\lceil t_i^D(R) /c\rceil$. Summing up,
$$
\reg_{A}(B ) \leq \max\{ \lceil t_i^D(R) /c\rceil -i : i\geq 0\}.
$$
If $c\geq \slope _D(R)$, then $t_i^D(R)\leq ci$ and hence
$\reg_{A}(B)=0$. This concludes the proof.
\end{proof}

As a corollary we have:

\begin{cor}
Let $S$ be a polynomial ring and $R=S/I$ a standard graded algebra
quotient of it and let $c\geq \slope _S(R)$. Then $\ind(R^{(c)})\geq
\ind(S^{(c)})$. In particular,
\begin{itemize}
\item[(a)] $\ind(R^{(c)})\geq c$. Furthermore, if $K$ has
characteristic $0$ or $>c+1$, then we have $\ind(R^{(c)})\geq
c+1$.
\item[(b)] If $\dim R_1=3$, then $\ind(R^{(c)})\geq 3c-3$.
\end{itemize}
\end{cor}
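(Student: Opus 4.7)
The plan is to observe that the main inequality is a direct application of Theorem \ref{thm:rateresult} with $D=S$: a polynomial ring is Koszul (one has $\Br(S)=1$), so the hypothesis is met, and for any $c\geq \slope_S(R)$ we immediately obtain $\ind(R^{(c)})\geq \ind(S^{(c)})$. Thus the whole corollary reduces to plugging in the lower bounds for $\ind(S^{(c)})$ already established in earlier sections.

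For part (a), I would invoke Corollary \ref{cor:ncpolynomial} specialized to $k=0$, where $V_S(c,0)=S^{(c)}$ and $\beta_{i,j}^T(S^{(c)})$ is precisely the invariant controlling property $N_p$. The unconditional bound $t_i^T(S^{(c)})<1+i+i/c$ forces $t_i^T(S^{(c)})\leq i+1$ in the range $i\leq c$, which gives $\ind(S^{(c)})\geq c$ and hence $\ind(R^{(c)})\geq c$. Under the characteristic hypothesis, the sharper inequality $t_i^T(S^{(c)})<1+i+(i-1)/c$ (valid for $i\geq c$) still keeps $t_i^T(S^{(c)})\leq i+1$ at $i=c$ and $i=c+1$; combined with the previous range, this yields $\ind(S^{(c)})\geq c+1$ and the claimed improvement.

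For part (b), the hypothesis $\dim R_1=3$ allows us to present $R$ as a quotient of $S=K[X_1,X_2,X_3]$, putting us in the $n=3$ setting of Theorem \ref{thm:ottpa}, which (in arbitrary characteristic) gives $\ind(S^{(c)})=3c-3$. Combined with the main inequality this yields $\ind(R^{(c)})\geq 3c-3$.

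All the substantial work has already been carried out in Theorem \ref{thm:rateresult}, Corollary \ref{cor:ncpolynomial}, and Theorem \ref{thm:ottpa}; the only thing left to check are the small numerical verifications in part (a) that the fractional terms $i/c$ and $(i-1)/c$ stay strictly less than $2$ in the relevant ranges, which they manifestly do. So there is no real obstacle, merely the routine assembly of pre-existing bounds.
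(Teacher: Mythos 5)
Your proposal is correct and follows exactly the route the paper intends: Theorem \ref{thm:rateresult} applied with $D=S$ (a polynomial ring is Koszul), combined with Corollary \ref{cor:ncpolynomial} at $k=0$ for part (a) and Theorem \ref{thm:ottpa} (after presenting $R$ over a three-variable polynomial ring) for part (b). Only your closing remark is loosely phrased: what one needs is that $i/c$ (resp.\ $(i-1)/c$) is at most $1$ in the relevant range, so that the strict inequality forces $t_i^T(S^{(c)})\leq i+1$ --- which is precisely what your second paragraph verifies --- rather than the fractions being ``strictly less than $2$''.
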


Note that $\slope _S(R)=2$ if $R$ is Koszul; see \cite{ACI}.
Furthermore $\slope _S(R)\leq a$ if $R$ is defined by either a
complete intersection of elements of degree $\leq a$ or by a
Gr\"obner basis of elements of degree $\leq a$.

\begin{rem}
Sometimes the bound in Theorem \ref{thm:rateresult} can be improved
by a more careful argumentation. Let $R=S/I$ and let $T$ be the
symmetric algebra of $S_c$. For instance, using the argument of the
proof of Theorem \ref{thm:rateresult} one shows that
$$
t^T_i(R^{(c)})\leq \max\{ t^T_{i-j}(S^{(c)})+\lceil
t^S_j(R)/c\rceil\} : j=0,1,\dots,i\}.
$$
It follows that $\ind(R^{(c)})\geq p$ if $c\geq p$ and $\ind(R)\geq
p$, a result proved by Rubei in \cite{RU00}. It is very easy to show
that $R^{(c)}$ is defined by quadrics, i.e. $\ind(R^{(c)})\geq 1$
provided $c\ge t_1^S(R)/2$. Similarly, one can prove that
$\ind(R^{(c)})\geq p$ if
$$
c\geq \max\{ p, \ \ \max\{ t_{j}^S(R)/j : j=1,\dots, p-1\}, \ \
t_{p}^S(R)/(p+1)\}.
$$
\end{rem}

\begin{rem}\label{3rema}
\
\begin{itemize}
\item[(a)] Let us say that a positively graded $K$-algebra is \emph{almost
standard} if $R$ is Noetherian and a finitely generated module over
$K[R_1]$. If $K$ is infinite, then this property is equivalent to
the existence of a Noether normalization generated by elements of
degree $1$. Galliego and Purnaprajna \cite[Theorem 1.3]{GAPU99}
proved a general result on the property $N_p$ of Veronese
subalgebras of almost standard $K$-algebras $R$ of $\depth\ge 2$
over fields of characteristic $0$: $R^{(c)}$ has $N_p$ for all $c\ge
\max\{\reg(R)+p-1,\reg(R), p\}$. If $\reg(R)\ge 1$ and $p\ge 1$,
this amounts to the property $N_{c-\reg(R) +1}$ of $R^{(c)}$ for all
$c\ge\reg(R)$. Thus Theorem \ref{thm:rateresult} gives a stronger
result for standard graded algebras.

\item[(b)] Eisenbud, Reeves and Totaro \cite{EIRETO} proved that the
Veronese subalgebras $R^{(c)}$ of standard graded $K$-algebra $R$
are defined by an ideals with Gr\"obner bases of degree $2$ for all
$c\ge (\reg(R)+1)/2$. It follows that these algebras are Koszul.

\item[(c)] If $R$ is almost standard and Cohen-Macaulay, then $R^{(c)}$ is
defined by an ideal with a Gr\"obner bases of degree $2$ for every
$c\ge \reg(R)$. See Bruns, Gubeladze and Trung \cite[Theorem
1.4.1]{BGT} or Bruns and Gubeladze \cite[Theorem 7.41]{PRK}.
\end{itemize}
\end{rem}

\section{The multigraded case}
\label{sec:multigraded}

The results  presented in this paper have natural extensions to the
multigraded case. Here we just formulate the main statements.
Detailed proofs will be given in the forthcoming article \cite{BCR}.
Suppose $S=K[X^{(1)},\dots,X^{(m)}]$ is a $\Z^m$-graded polynomial
ring in which each $X^{(i)}$ is the set of variables of degree
$e_i\in \Z^m$. For a vector $c\in (c_1,\dots,c_m)\in \N_+^m$
consider the $c$-th diagonal subring $S^{(c)}=\dirsum_{i\in \N}
S_{ic}$, the coordinate ring of the corresponding Segre-Veronese
embedding. The following result improves the bound of Hering,
Schenck and Smith \cite{HSS} by one:

\begin{thm}
With the notation above one has: $\min(c)\leq \ind(S^{(c)})$.
Moreover, we have $\min(c)+1\leq \ind(S^{(c)})$ if $\chara K=0$ or $\chara
K>1+\min(c)$.
\end{thm}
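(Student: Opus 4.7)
The plan is to extend the arguments of Sections \ref{sec:generalbounds}--\ref{sec:polynomial} to the multigraded setting; since all the constructions are monomial-based, the adaptations should be cosmetic. Write $c_0=\min(c)$ and let $K(S_c,S)$ denote the multigraded Koszul complex on the $K$-vector space $S_c$ spanned by the monomials of multi-degree $c$. By the multigraded version of Lemma \ref{lem:Bettiinterpreter}, proving that $S^{(c)}$ satisfies $N_p$ (with respect to its presentation as a standard graded quotient of $T=\Sym(S_c)$) is equivalent to
$$
H_i(S_c,S)_{jc}=0\quad\text{for all } 1\leq i\leq p \text{ and } j\geq i+2,
$$
since the induced $\Z^m$-grading on $T$ forces each standard-graded piece $T_j$ to live in the single multi-degree $jc$.

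First, I would verify that Lemma \ref{regZ} and Theorem \ref{thm:generators} admit multigraded analogues: the cycles $Z_t(S_c)$ should be generated in multi-degrees $a$ with $a_\ell\leq t(c_\ell+1)$ for each $\ell$, and modulo $Z_1(S_c)^t$ by elements with strict inequality in a coordinate where $a_\ell>0$. The proof of Theorem \ref{thm:generators} picks a variable $X_n$ with $\alpha_n>0$ and runs a cancellation move; in the multigraded setting, given $a$ with $a_\ell>0$ one picks a variable from the group $X^{(\ell)}$ and stays within that group during the cancellation, which is what preserves the multi-degree.

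Next, Lemma \ref{lem:newcycles} is already purely monomial, hence multigraded. The heart of the proof is a multigraded analogue of Theorem \ref{thm:morevanishing}, of the form
$$
(c_0+1)!\,\mm^{c-e_\ell}\,Z_1(S_c)^{c_0}\subset B_{c_0}(S_c),
$$
where $\ell$ is chosen so that $c_\ell=c_0$. The argument mirrors the single-graded one, constructing switching boundaries from pairs $(X_i,X_j)$ drawn from the fixed group $X^{(\ell)}$; this is precisely what forces the factorial $(c_0+1)!$ and hence the characteristic hypothesis $\chara K>c_0+1$.

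Putting these ingredients together via the argument of Corollary \ref{greengoes2} yields vanishing of $H_i(S_c,S)_a$ whenever $a-ic$ is big enough componentwise --- unconditionally as soon as $a-ic\geq c$ in every coordinate (automatic for $a=jc$ with $j\geq i+2$, giving $\ind(S^{(c)})\geq c_0$), and, under the characteristic hypothesis, one step tighter in the short coordinate (giving $\ind(S^{(c)})\geq c_0+1$). The main obstacle is keeping the multigraded bookkeeping in the analogue of Theorem \ref{thm:morevanishing} tight: every exchange move must preserve multi-degree, so it is forced to stay within a single group of variables, and selecting the group where $c_\ell=c_0$ is exactly what ensures the exponent of the factorial equals $c_0+1$ rather than a larger value that would weaken the characteristic assumption. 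The complete details are, as promised, deferred to \cite{BCR}.
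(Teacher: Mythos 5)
First, a point of order: the paper does not actually prove this theorem. Section \ref{sec:multigraded} only states it and explicitly defers all proofs to the forthcoming article \cite{BCR}, so there is no in-paper argument to compare yours against. Your sketch does follow the route the authors clearly intend (multigraded Betti interpreter, generation bounds for Koszul cycles, the cycles of Lemma \ref{lem:newcycles}, and an analogue of Theorem \ref{thm:morevanishing}), the reduction to the vanishing of $H_i(S_c,S)_{jc}$ for $j\geq i+2$ is correct, and the bookkeeping for the first bound $\min(c)\leq\ind(S^{(c)})$ is plausibly "cosmetic'' modulo one caveat: the ideal generated by $S_c$ is not $\mm$-primary for $m\geq 2$, so Lemma \ref{veryeasy} and Proposition \ref{boundZ} cannot be quoted in total degree and must genuinely be redone multigradedly.

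The real gap is in your multigraded analogue of Theorem \ref{thm:morevanishing}, which is the whole content of the improvement by one. The single-graded proof ends by writing $(c+1)!f$ as $\sum_j \pm X_{1j_1}\cdots X_{cj_c}W_j$ modulo boundaries and killing each summand via $X_{1j_1}\cdots X_{cj_c}\in\mm^c$ and $\mm^c Z_c(\mm^c)\subset B_c(\mm^c)$. In your setting the exchange identity (\ref{eq:van1}) forces all the pairs $X_{i0},X_{i1}$ and all the monomials $b_i$ to be compatible with a single group $X^{(\ell)}$ with $c_\ell=c_0$ (you say as much), and then the resulting prefactors $X_{1j_1}\cdots X_{c_0j_{c_0}}$ have multidegree $c_0e_\ell$. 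For $m\geq 2$ such a monomial is \emph{not} in the ideal generated by $S_c$, so the inclusion $I_{S_c}\cdot Z_{c_0}(S_c)\subset B_{c_0}(S_c)$ does not apply and the summands $X_{1j_1}\cdots X_{c_0j_{c_0}}W_j$ are not visibly boundaries; the coefficients of each summand are individually divisible by monomials of multidegree $c$, but that does not place the cycle in $I_{S_c}Z_{c_0}(S_c)$. This is exactly the step where "staying within one group'' — which you correctly identify as necessary for the exchange moves — destroys the final reduction, and a genuinely new idea (not present in Sections \ref{sec:cycles}--\ref{sec:polynomial}) is needed to close it. A secondary imprecision: as written, your claimed inclusion concerns all of $Z_1(S_c)^{c_0}$, but the antisymmetrization argument can only be set up for products of $Z_1$-generators coming from a single group, and you do not explain how the mixed products (which do occur among the generators of $Z_{c_0+1}(S_c)$ in the critical multidegree) are handled.
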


Similarly one has the multigraded analog of Theorem \ref{thm:rateresult}.
Here one uses the fact, proved in \cite{CHTV}, given any
$\Z^m$-graded standard graded algebra quotient of $S$ then if the $c_j$'s
are big enough (in terms of the multigraded Betti numbers of $R$
over $S$) then $\reg_{S^{(c)}}(R^{(c)})=0$.

\begin{prop}
Assume that for all $j=1\dots,m$ one has $c_j\geq \max\{ \alpha_j/i:
i>0, \alpha\in \Z^m \text{ and } \beta_{i,\alpha}^S(R)\neq 0,\} $
then $\ind(R^{(c)})\geq \ind(S^{(c)})$.
\end{prop}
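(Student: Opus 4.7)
The plan is to mimic the proof of Theorem \ref{thm:rateresult} in the multigraded setting, with the cited result from \cite{CHTV} playing the role of the key input. Start with the minimal $\Z^m$-graded free resolution of $R$ over $S$,
\[
\cdots\to F_p\to\cdots\to F_1\to F_0\to R\to 0,\qquad F_i=\bigoplus_\alpha S(-\alpha)^{\beta^S_{i,\alpha}(R)},
\]
and apply the exact $c$-diagonal functor $(-)^{(c)}=\bigoplus_{j\in\N}(-)_{jc}$ to obtain an exact complex of finitely generated graded $S^{(c)}$-modules
\[
\cdots\to F_p^{(c)}\to\cdots\to F_1^{(c)}\to F_0^{(c)}\to R^{(c)}\to 0.
\]

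The technical heart is the regularity bound on each $F_i^{(c)}$ viewed as an $S^{(c)}$-module. Each summand $S(-\alpha)^{(c)}$ is (a shift of) a Segre-Veronese module, and the multigraded analog of Lemma \ref{lem:t_iveronese}, which is exactly the content of \cite{CHTV} in this context, says that $\reg_{S^{(c)}}(S(-\alpha)^{(c)})\le \max_j\lceil \alpha_j/c_j\rceil$. The hypothesis $c_j\ge \alpha_j/i$ for every $\alpha$ with $\beta^S_{i,\alpha}(R)\ne 0$ ensures that $\max_j\lceil \alpha_j/c_j\rceil\le i$ for all such $\alpha$, and hence $\reg_{S^{(c)}}(F_i^{(c)})\le i$.

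Now apply Lemma \ref{lem:tiineq}(b) to the exact complex above to conclude
\[
\reg_{S^{(c)}}(R^{(c)})\le \sup\{\reg_{S^{(c)}}(F_i^{(c)})-i:i\ge 0\}\le 0,
\]
so that $\reg_{S^{(c)}}(R^{(c)})=0$. Writing $R^{(c)}=S^{(c)}/J$ and applying Lemma \ref{lem:tiineq}(d), we obtain $\ind(R^{(c)})\ge \ind(S^{(c)})$, as required.

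The main obstacle is the verification of the Segre-Veronese regularity estimate $\reg_{S^{(c)}}(S(-\alpha)^{(c)})\le \max_j\lceil \alpha_j/c_j\rceil$, which is the proper multigraded extension of Lemma \ref{lem:t_iveronese}. In the $\Z$-graded setting the linearity of the Veronese modules rests on a clean induction; in the multigraded setting one has to keep track of each coordinate direction $e_j$ separately while proving linearity with respect to the coarsened $\N$-grading of $S^{(c)}$, and this combinatorial analysis is precisely what \cite{CHTV} provides. Once this is in hand, the remainder of the argument is a formal transcription of the proof of Theorem \ref{thm:rateresult}, so the details are deferred to \cite{BCR}.
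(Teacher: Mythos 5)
Your argument is correct and follows exactly the route the paper intends: the paper states this proposition as the multigraded analog of Theorem \ref{thm:rateresult}, deferring details to \cite{BCR} and citing \cite{CHTV} for the key fact that $\reg_{S^{(c)}}(R^{(c)})=0$ under the stated hypothesis on the $c_j$, which is precisely what you establish via the diagonal of the minimal resolution and the regularity bound $\reg_{S^{(c)}}(S(-\alpha)^{(c)})\le\max_j\lceil\alpha_j/c_j\rceil$. Your use of the hypothesis to get $\reg_{S^{(c)}}(F_i^{(c)})\le i$ and the final appeal to Lemma \ref{lem:tiineq}(b) and (d) match the template of Theorem \ref{thm:rateresult} exactly.
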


\end{document}